    \numberwithin{equation}{section}
    \newtheorem{dfn}{Definition}[section]
    \newtheorem{thm}{Theorem}[section]
   \newtheorem{lem}{Lemma}[section]
   \newtheorem{rmk}{Remark}[section]
\begin{document}
\begin{center} 
{\Large \bf New approach for elastic collisions with singular  stress functions}  

\vskip 12pt
Toyohiko Aiki \\
Department of Mathematics, Physics and Computer Sciences, Faculty of Science, \\ Japan Women's University \\
2-8-1 Mejirodai, Bunkyo-ku, Tokyo~112-8681, Japan, \\
{aikit@fc.jwu.ac.jp}

\vskip 12pt
Chiharu Kosugi\\
Graduate School of Sciences and Technology for Innovation,  Yamaguchi University\\
1677-1 Yoshida, Yamaguchi City, Yamaguchi 753-8511,  Japan\\
ckosgi@yamaguchi-u.ac.jp

\end{center}

{\bf Abstract. } 
A collision of a rubber rod to a hard floor is regarded as a simple example of obstacle problems for elastic material.  
In this article  we have proposed a new mathematical model for the collision phenomenon by applying beam equations with singular stress functions, which is investigated in our recent works. As in the works we have established a mathematical method to deal with the singular stress function. Here, we demonstrate the validity of our modeling through observation to the numerical results. Also, we present existence and uniqueness results of the model given as initial boundary value problems.

\section{Modeling}
We consider that we drop a rubber rod onto a hard floor vertically at time $0$. Clearly, the rod collides the floor and bounces off the floor many times. The aim of this paper is to propose a new mathematical model for the collision and to solve it, mathematically.  
As in Figure \ref{fig1}, we suppose that the elastic rod is a one-dimensional material whose natural length is 1. Also, for the original position $x \in [0,1]$ in the natural state, $u = u(t, x)$ denotes the height of $x$ at time $t \in [0,T]$, $0 < T<  \infty$, where the height of the floor surface is 0. Usually, the dynamics of elastic materials are described by deformations, however here we emphasize that we have adopted height, i.e., position, as the main variable to deal with the dynamics. 

\begin{figure}[htbp]
\begin{minipage}{0.31\hsize}
\centering
\includegraphics[width=0.6\textwidth]{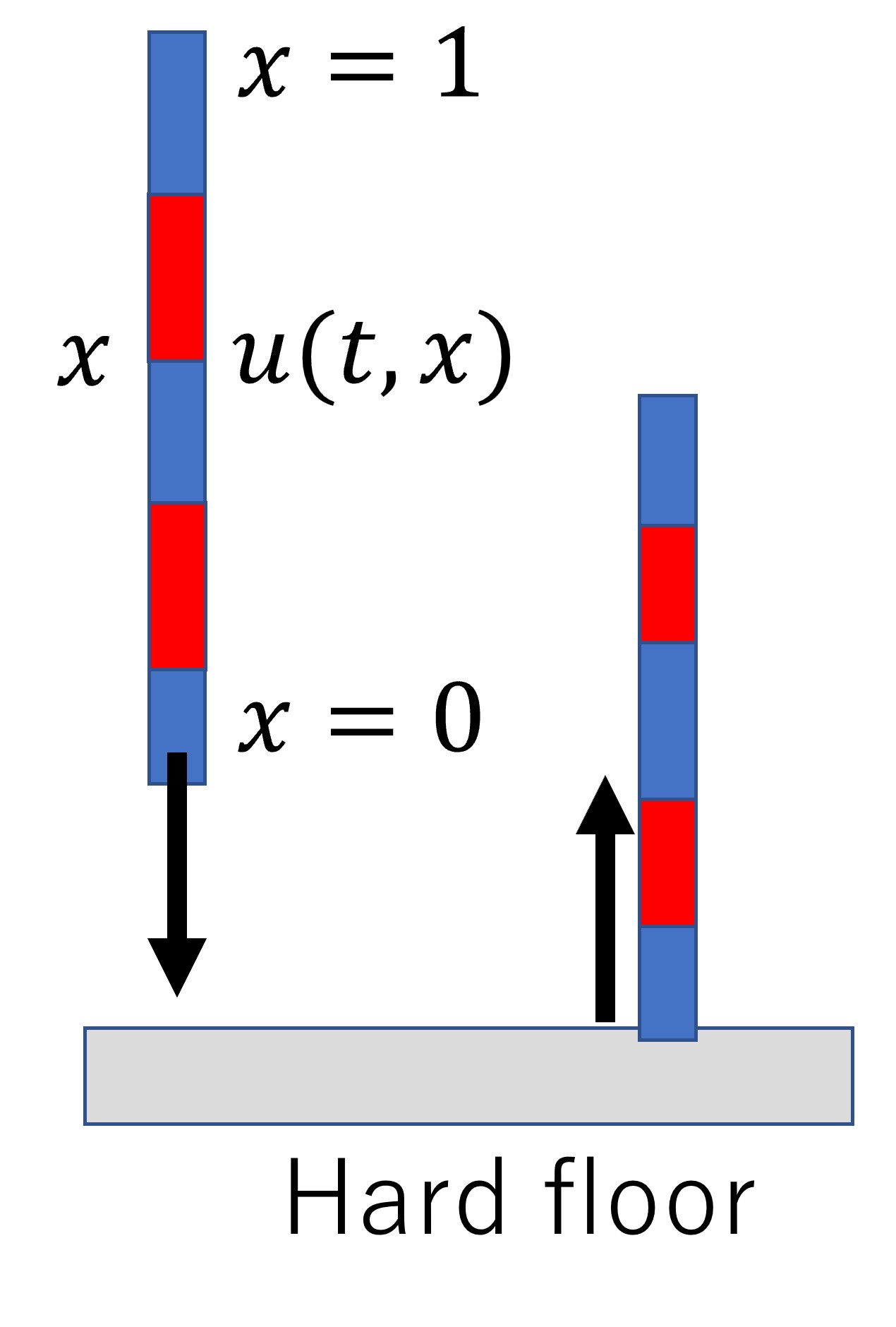}
\caption{Collision of a rubber rod}
\label{fig1}
\end{minipage}
\begin{minipage}{0.31\hsize}
\centering
\includegraphics[width=0.86\textwidth]{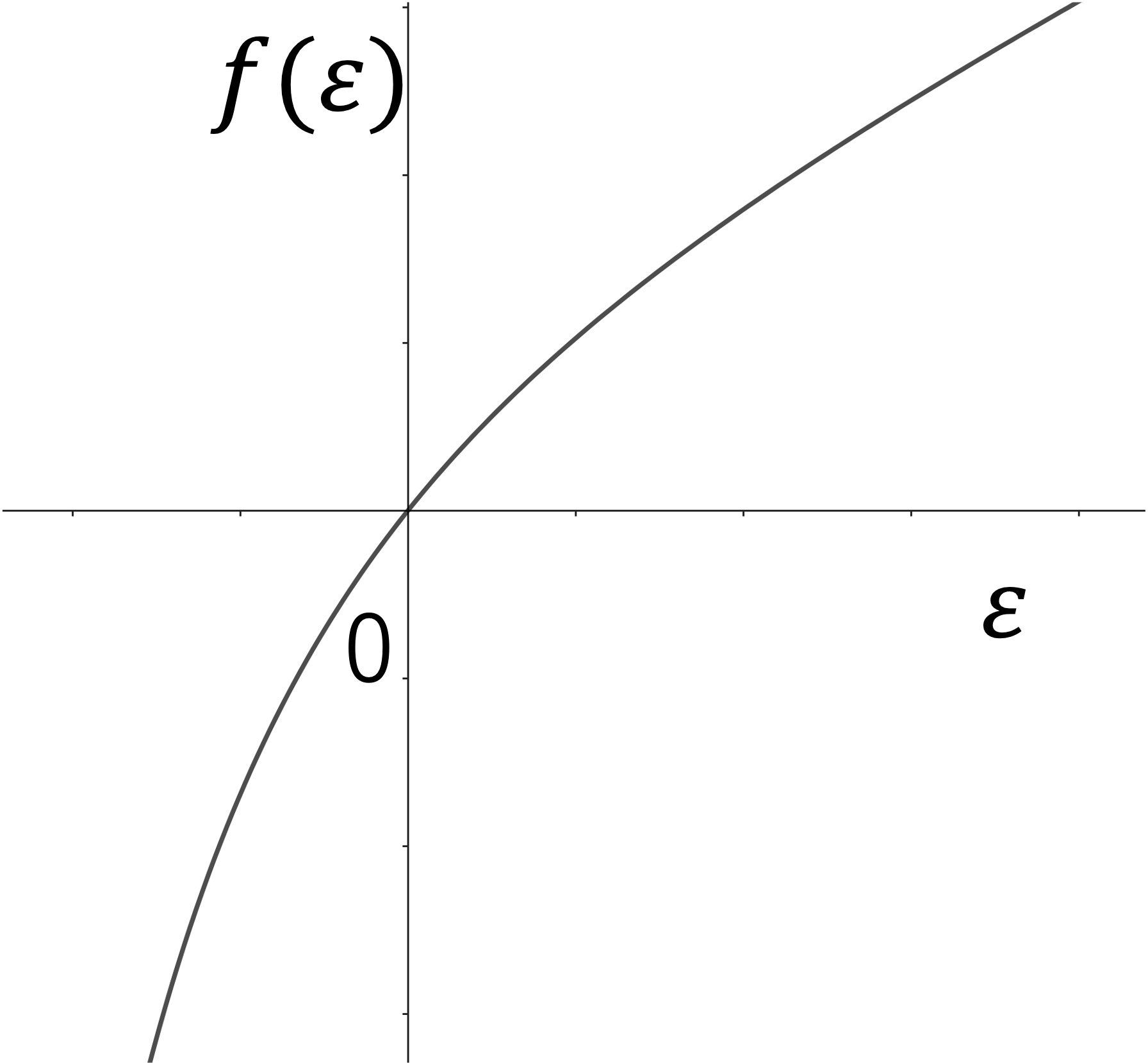}
\caption{Stress function $f$}
\label{fig2}
\end{minipage}
\begin{minipage}{0.31\hsize}
\centering
\includegraphics[width=0.98\textwidth]{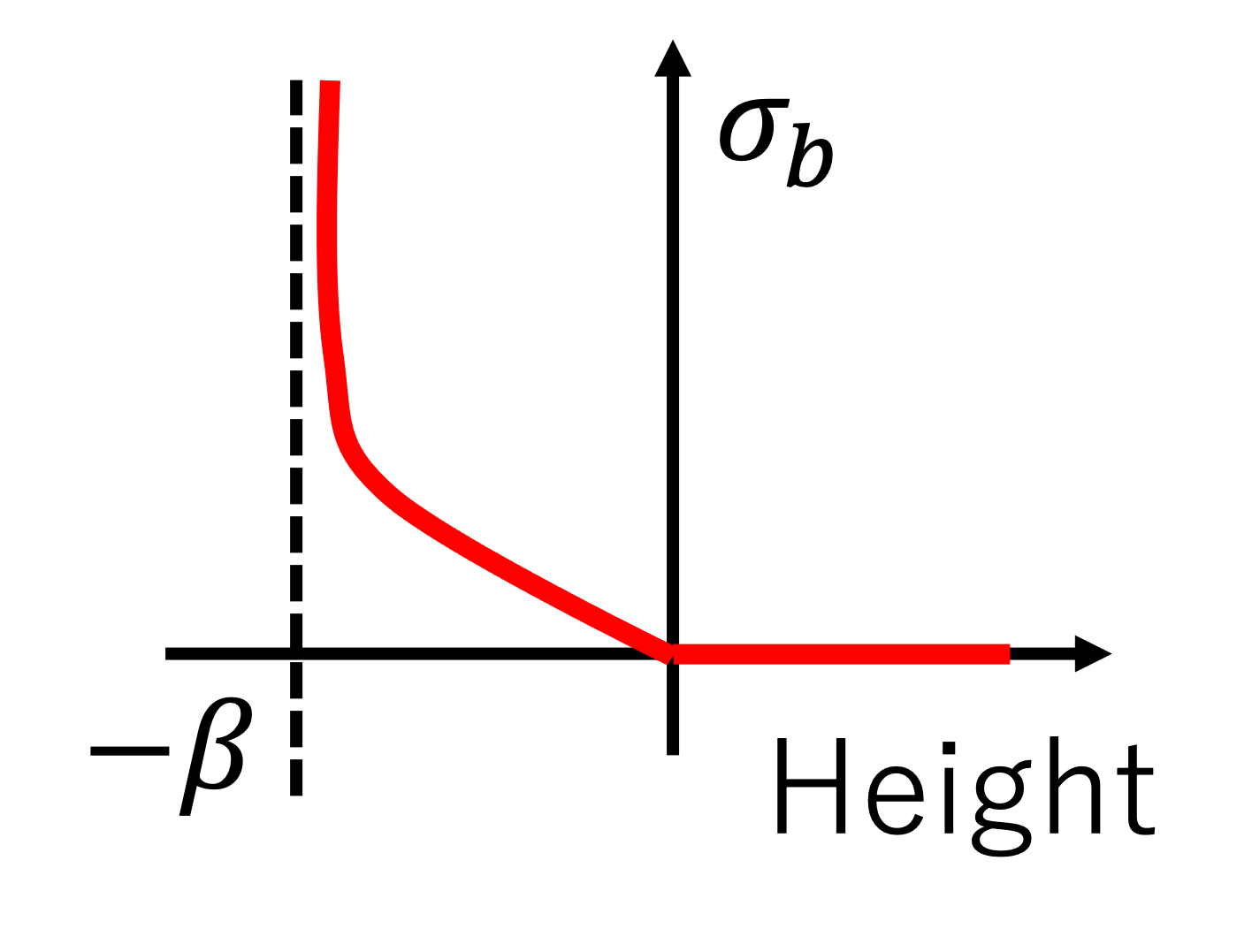}
\caption{$\sigma_b$}
\label{fig3}
\end{minipage}
\end{figure}

Recently, we have investigated the dynamics of elastic materials by applying the stress function having a singularity whose example is  shown in Figure \ref{fig2}, which was proposed in \cite{Aiki-Kro-Muntean} to analyze the mathematical model for the rubber form.  Here, by using the singular functions $f$ and $\sigma_b$ (see Figure \ref{fig3}), we have introduced the initial boundary value problem whose unknown  function is the height $u: Q(T) \to {\mathbb R}$ satisfying
\begin{align}
&u_{tt}  + \gamma u_{xxxx} -   f(\varepsilon)_x 
 - \mu  u_{txx} =  g
   \mbox{ in } Q(T),  \label{EQ} \\
& \gamma u_{xxx} -  f(\varepsilon) -  \mu u_{tx} = 0, u_{xx} = 0
\mbox{ at } x = 1  \mbox{ on }   (0,T),  \label{BC1}\\
& \gamma u_{xxx} -  f(\varepsilon) -  \mu u_{tx}  + \sigma_b(u(t)) = 0, u_{xx}  = 0
\mbox{ at } x = 0 \mbox{ on  }   (0,T),  \label{BC0} \\
& u(0) = u_0,  u_t(0) = v_0 \mbox{ on } (0,1),  \label{IC} 
\end{align}
where $Q(T) = (0,T) \times (0, 1)$,  the strain $\varepsilon$  is given by  $\varepsilon = |u_x|-1$, $\kappa$ is the elastic constant of the rod, $\gamma$ is a positive constant, $\mu$ is the viscosity constant, $g$ is the gravity, $f: (-1,\infty) \to {\mathbb R}$ and $\sigma_b: (-\beta, \infty) \to {\mathbb R}$ are given by 
\begin{align}
 \displaystyle f(\varepsilon) & = \frac{\kappa}{2} (\varepsilon + \frac{1}{2} - \frac{1}{2(1 + \varepsilon)^3}), \label{eqn1-4} \\
\sigma_b(h) & = \left\{ \begin{array}{ll}
            \displaystyle  \kappa_b\left(  \frac{\beta^3}{(h  + \beta)^3  } -1 \right) & \mbox{ for  } h \leq 0, \\
             0 & \mbox{ otherwise,  }  
        \end{array} \right.  \label{sigma_b}
\end{align}
$\beta$ and $\kappa_b$ are positive constants, and $u_0$ and $v_0$ are the initial height and velocity, respectively. 

Our main mathematical result is existence and uniqueness of solutions to the system  P $=$ P$(\mu, u_0,  v_0, f, \sigma_b):=$ \{\eqref{EQ} $\sim$ \eqref{IC}\},  and will be detailed in the next section. 
The derivation of the system P is based on two ideas, application of beam equations with the singular stress function and approximation of the Signorini condition. 
In the rest of this section, we derive the system and present previous results concerning the elastic collisions. 

The beam equation is well-known as a mathematical model of dynamics for elastic materials, and has been extensively studied, for instance Brokate-Sprekels \cite{Bro-Sp},   Racke-Zhang \cite{Racke-Shang} and Takeda-Yoshikawa \cite{Takeda-Yoshikawa2012,Takeda-Yoshikawa2013}.  Also, for shrinking and stretching motion of the elastic curve on the plane ${\mathbb R^2}$ we list our recent results \cite{Aiki-Kosugi,KAAO,Kosugi-Aiki2,Kosugi1,Kosugi2} on the beam equation with the stress function $f \in C((-1, \infty))$ having the  singularity such that $f(\varepsilon) \to  -\infty$ as $\varepsilon \downarrow -1$.  
This type of the stress functions was already applied to mechanics for compressive solid by Ogden \cite{Ogden}, Simo-Miehe \cite{Simo-Miehe} and Holzapfel  \cite{Holzapfel}. In our results  we show that applying the singular stress function enables us to handle large deformations effectively and obtain the lower bound for the strain. We note that the differential equation \eqref{EQ} has the viscosity term $\mu u_{txx}$ for both mathematical and practical purposes. Specifically,  we establish existence and uniqueness results for strong and weak solutions when $\mu > 0$,  as shown in Theorem \ref{main_th} (1) and (2). However, in absence of the viscosity term case ($\mu = 0$), we can prove existence and uniqueness only for a weak solution,  as stated in Theorem \ref{main_th} (3). 

Next, we discuss the boundary conditions \eqref{BC1} and  \eqref{BC0}. At $x=1$, we impose the homogenous Neumann boundary condition, as the rod is free. On the other hand, at $x = 0$, the edge of the rod collides with the hard floor (see Figure \ref{fig1}) and this collision is regarded as one kind of obstacle problems.  The obstacle problem is typically represented by the Signorini condition, 
\begin{equation}
u(t,0 ) \geq 0, u_x(t,0) \geq 0, u(t,0) \cdot u_x(t,0) = 0 \mbox{ for } t \in  [0,T].  \label{Signorini} 
\end{equation} 
The Signorini condition is applied not only to the contact problem for elastic materials but also to a mathematical presentation of unsteady saturated water flow in a porous medium Hornung \cite{Hornung}.  As in  \cite{Hornung}, the Signorini conditions accompanied  by parabolic differential equations were studied by many authors, since we can show the well-posedness of  the problem by applying the monotone operator theory. However, not many results have been obtained for the wave equation. For instance, Lebeau-Schatzman \cite{Lebeau-Schatz} established the existence and uniqueness in the half space, Kim \cite{Kim} proved existence of a weak solution in general domains and several approaches were attempted. Recently, Kashiwabara and Itou \cite{Kashiwabara-Itou} established unique solvability for the linear elasticity with the Signorini condition of dynamic type and Tresca friction condition on the boundary. Roughly, rewriting \eqref{Signorini} to align with their approach yields 
$$
u(t,0 ) + \delta u_t(t,0) \geq 0, u_x(t,0) \geq 0, (u(t,0) + \delta u_t(t,0)) \cdot(u_x(t,0) = 0 \mbox{ for } t \in  [0,T], 
$$
where $\delta$ is  a positive constant. As mentioned as above, the contact problem remains unresolved. 
The primary aim of this article is to propose a novel approach for addressing the contact problem. 
The core concept of this approach is based on the assumption that the normal force arises from the elasticity of the hard floor. 
Under this assumption we impose the boundary condition \eqref{BC0}, where $\sigma_b$ is the stress by bending the floor and $\beta$ denotes the bending limit. Thus, we get the system P. We note that a similar problem with the Lipschitz continuous $\sigma_b$ on $\mathbb R$ was studied in \cite{Aiki-Kro-Muntean} as a mathematical model for rubber foams. 

At the end of this section, we give Figures \ref{fig:a} and \ref{fig:b} representing numerical solutions of  P for $\mu =0$ and $\mu >0$, respectively. In these results we divide the interval to $N$-pieces with $N = 5$ and $\gamma = 0$, and each curve indicates the height of the division points. 
It is evident from the figures that periodic behavior is observed for $\mu =0$, while decay is observed for $\mu > 0$. 
From this observation we infer that $\mu  = 0 $ and $\mu >0$ correspond that the restitution  coefficients  are equal to 1 and less than 1, respectively.  It should be noted here that it is our future problem is to find a more appropriate mathematical description for the viscosity term to represent the energy decay for the collision phenomena. 
 
\begin{figure}[H]
\centering
\begin{minipage}[b]{0.49\columnwidth}
    \centering
    \includegraphics[width=0.9\columnwidth]{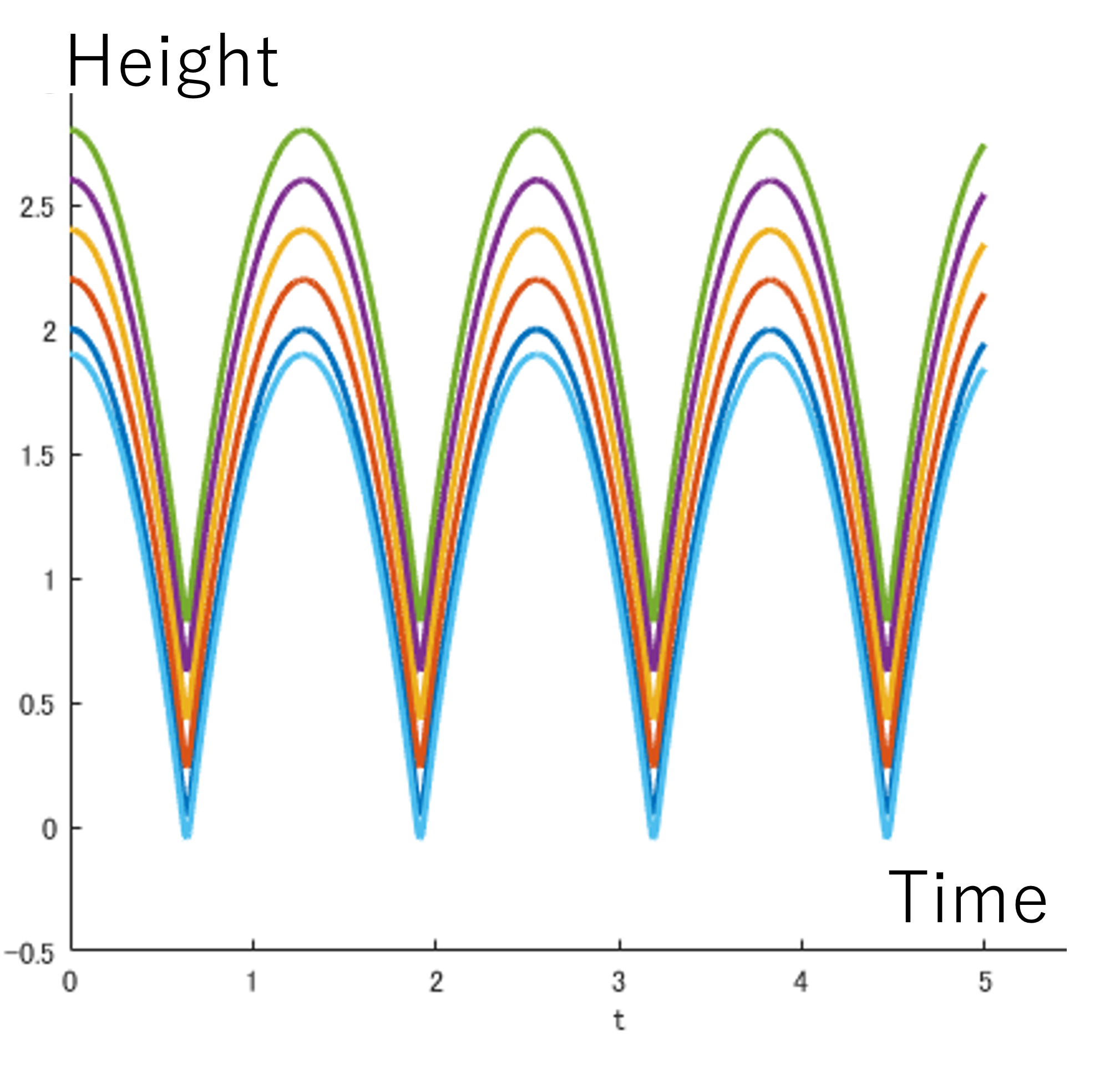}
    \caption{$\mu = 0$.}
    \label{fig:a}
\end{minipage}
\begin{minipage}[b]{0.49\columnwidth}
    \centering
    \includegraphics[width=0.9\columnwidth]{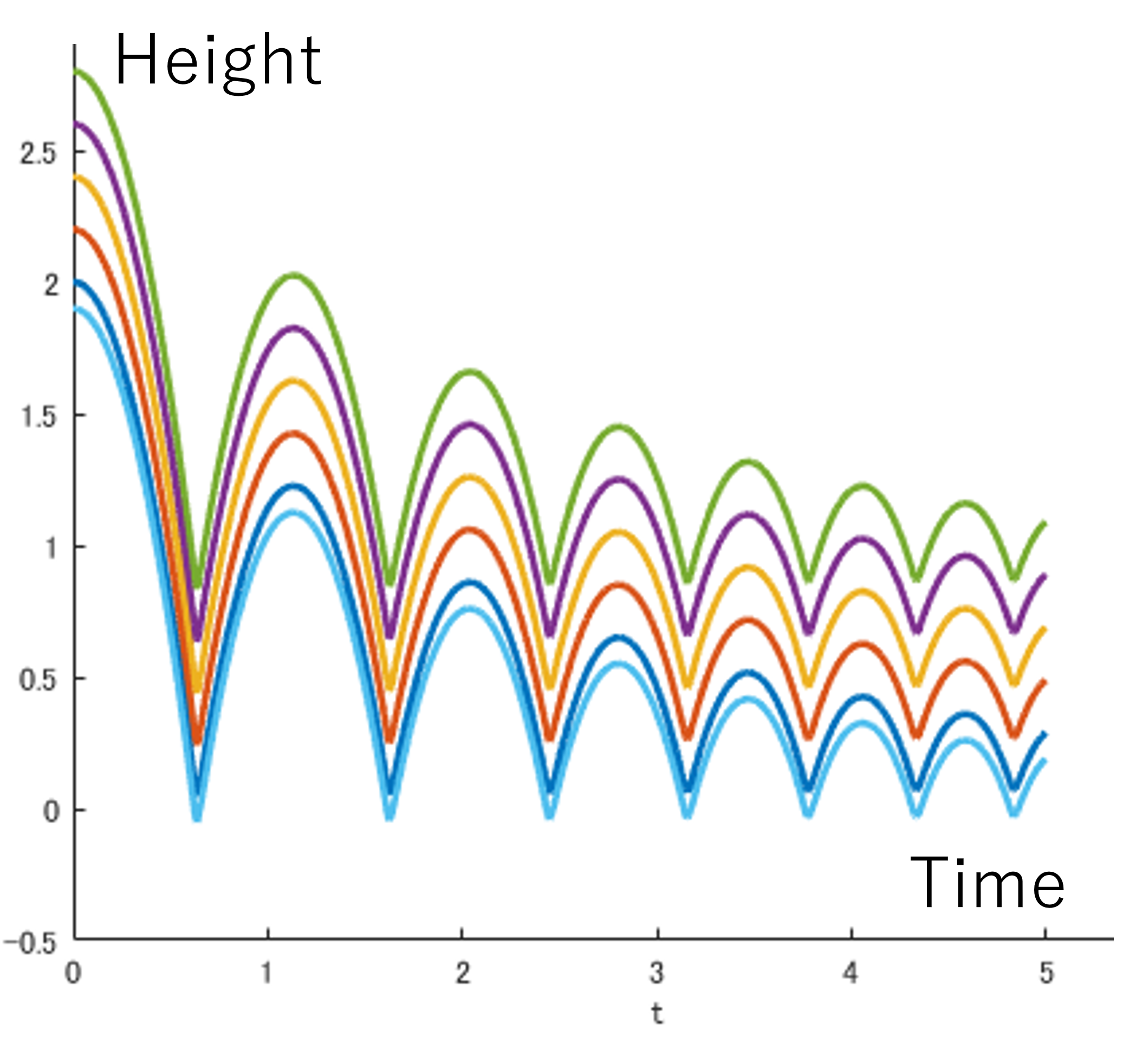}
    \caption{$\mu = 100$.}
    \label{fig:b}
\end{minipage}
\end{figure}

\section{Main result} 
Before we define strong and weak solutions of P$(\mu, u_0,  v_0, f, \sigma_b)$ on $[0,T]$ for $T > 0$ and $\mu \geq 0$, 
we provide an assumption for $f$ and  $\sigma_b$ as follows: 
\\
(A1) $f \in C^1((-1,\infty))$ and  $\sigma_b \in C^1((-\beta, \infty))$ with $\sigma_b = 0$ on $[0,\infty)$ and $\sigma_b \geq 0$ on $(-\beta, 0]$,  where 
$\beta$ is a  positive constant. 
Also, there exists a positive constant $N^*$ such that $f(\varepsilon) \geq 0$ for 
$\varepsilon \geq N^*$. Moreover,  primitives $\hat{f}$ and $\hat{\sigma}_b$ of $f$ and $\sigma_b$ 
with $\hat{f}(0) = \hat{\sigma}_b(0) = 0$, respectively,  satisfy 
\begin{equation}
 \hat{f}(\varepsilon)  \geq -C_f + \frac{c_f}{ (1 + \varepsilon)^2}  \mbox{ for } \varepsilon > -1, 
  \nonumber 
 \end{equation} 
where $C_f$ and $c_f$ are positive constants. Moreover, for any $N >0$ there exists a positive constant $\delta_N$ such that $\hat{\sigma}_b(r) \leq N$ implies $r \geq -\beta + \delta_N$. 

It is clear that for  $f$ and $\sigma_b$ given by \eqref{eqn1-4} and \eqref{sigma_b}, respectively,   (A1) holds.

\begin{dfn} \label{def_strong}  \rm
Let  $T > 0$, $\mu > 0$ and $u$ be a function on $Q(T)$.  We call that the function $u$ is a (strong) solution of P$(\mu, u_0,  v_0, f, \sigma_b)$ on $[0, T]$, if the following conditions (S1)-(S3) hold: 

(S1) $u \in W^{2,\infty}(0,T; L^2(0,1)) \cap W^{2,2}(0,T; H^1(0,1)) \cap   W^{1,\infty}(0,T; H^2(0,1)) \cap$  \\ $L^{\infty}(0, T;  H^4(0,1)) =: S(T)$. 

(S2) $u_x > 0$ on $\overline{Q(T)}$ and $u(t,0) > - \beta$ for $t \in [0,T]$. 

(S3) \eqref{EQ} - \eqref{IC} hold in the usual sense. 

\end{dfn}

\begin{dfn} \label{def_weak}  \rm
Let  $T > 0$,  $\mu > 0$ and $u$ be a function on $Q(T)$.  We call that the function $u$ is a weak solution of P$(\mu, u_0,  v_0, f, \sigma_b)$ on $[0, T]$, if the following conditions (W1)-(W3) hold: 

(W1) $u \in C^1([0,T]; L^2(0,1)) \cap W^{1,2}(0,T; H^1(0,1)) \cap C([0, T] ;  H^2(0,1)) =: W(T)$. 

(W2) $u_x > 0$ on $\overline{Q(T)}$,  $u(t,0) > - \beta$ for $t \in [0,T]$ and $u(0) = u_0$ on $(0,1)$. 

(W3) It holds that
\begin{align}
 & - \int_{Q(T)} u_t \eta_t dxdt + \gamma \int_{Q(T)} u_{xx} \eta_{xx} dxdt + \int_{Q(T)} f(u_x -1) \eta_x dxdt + \mu \int_{Q(T)} u_{tx} \eta_x dxdt  
 \nonumber \\
 = & \int_0^1 v_0 \eta(0) dx  + \int_{Q(T)} g \eta \ dxdt - \int_0^T \sigma_b(u(\cdot, 0)) \eta(\cdot,0) dt  \nonumber 
\end{align} 
for any $\eta \in {\mathcal T}_0(T) := \{ \zeta \in W^{1,2}(0, T; L^2(0,1)) \cap L^2(0,T; H^2(0,1)): \zeta(T) = 0\}$.  
\end{dfn} 

Here, we note that by $W(T) \subset C(\overline{Q(T)})$, (W2) is well-defined for weak solutions. 
Next, we give a characterization of the weak solution.
\begin{rmk} \label{equi_qeak}
Let $u$ be a function on $Q(T)$. $u$ is a weak solution of P$(\mu, u_0,  v_0, f, \sigma_b)$ on $[0, T]$, if and only if 
(W1) and (W2) hold, and $u$ satisfies 
\begin{align}
 & - \int_{Q(T)} u_t \eta_t dxdt  + \int_0^1 u_t(T) \eta (T) dx + \gamma \int_{Q(T)} u_{xx} \eta_{xx} dxdt 
+  \int_{Q(T)} f(u_x - 1) \eta_x dxdt 
 \nonumber \\
& \ + \mu \int_{Q(T)} u_{tx} \eta_x dxdt  
 \nonumber \\
 = & \int_0^1 v_0 \eta(0) dx  + \int_{Q(T)} g \eta \ dxdt - \int_0^T \sigma_b(u(\cdot, 0)) \eta(\cdot,0) dt  \nonumber 
\end{align} 
for any $\eta \in {\mathcal T}(T) :=  W^{1,2}(0, T; L^2(0,1)) \cap L^2(0,T; H^2(0,1))$.  
\end{rmk}

Since $u_t \in C([0,T]; L^2(0,1))$, we can prove this remark, immediately. 

\vskip 12pt 
Moreover, we define a weak solution of  P$(\mu, u_0,  v_0, f, \sigma_b)$ with  $\mu = 0$.  
\begin{dfn} \label{def_weak0}  \rm
 We call that the function $u$ on $Q(T)$ is a weak solution of P$(0, u_0,  v_0, f, \sigma_b)$ on $[0, T]$ for $T > 0$, if the following conditions (W0-1)-(W0-3) hold: 

(W0-1) $u \in W^{1,\infty}(0,T; L^2(0,1))  \cap L^{\infty}(0, T;  H^2(0,1)) =: W_0(T)$. 

(W0-2) For some $\delta >0$,  $u_x \geq \delta $ a.e. on $Q(T)$ and $u(0) = u_0$ on $(0,1)$. 

(W0-3) It holds that
\begin{align}
 & - \int_{Q(T)} u_t \eta_t dxdt + \gamma \int_{Q(T)} u_{xx} \eta_{xx} dxdt + \int_{Q(T)} f(\varepsilon) \eta_x dxdt  \nonumber \\
 = & \int_0^1 v_0 \eta(0) dx + \int_{Q(T)} g \eta \ dxdt  - \int_0^T \sigma_b(u(\cdot, 0)) \eta(\cdot,0) dt \mbox{ for any } \eta \in {\mathcal T}_0(T).   
\end{align}

\end{dfn}


\begin{thm}  \label{main_th} Let $\gamma > 0$, $g \in {\mathbb R}$ and assume (A1). 

(1) Let $\mu > 0$ and assume (AS): 
$$ \leqno{\mbox{  (AS)} }   \left\{ \begin{array}{l}
 u_0 \in H^4(0,1),  v_0 \in  H^2(0, 1),  u_{0x}  > 0 \mbox{  on  } [0,1], u_0(0) > -\beta, 
u_{0xx}(0) = u_{0xx}(1) = 0,  \\
\gamma u_{0xxx}(0)  - \mu v_{0x}(0) + \sigma_b(u_0(0)) = f(u_{0x}(0) -1) , \\
\gamma u_{0xxx}(1)  - \mu v_{0x}(1) = f(u_{0x}(1) - 1). 
\end{array}  \right. 
$$ 
Then the problem  P$(\mu, u_0,  v_0, f , \sigma_b)$ has a strong solution on $[0,T]$. 
 
(2) Let $\mu > 0$. If the following condition (AW) holds, then the problem  P$(\mu, u_0,  v_0, f,   \sigma_b)$ has a unique weak solution on $[0,T]$. 
$$ \leqno{\mbox{  (AW)} }    \quad u_0 \in H^2(0,1), v_0 \in  L^2(0, 1), u_0(0) > -\beta  \mbox{ and } u_{0x}  > 0  \mbox{ on } [0,1]. 
$$

(3) If (AW) holds,  then the problem  P$(0, u_0,  v_0, f, \sigma_b)$ has a unique weak solution on $[0,T]$. 
\end{thm}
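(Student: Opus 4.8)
The plan is to handle all three parts through one approximation scheme: replace the singular $f$ and $\sigma_b$ by globally Lipschitz $f_\lambda,\sigma_{b,\lambda}$ that agree with $f,\sigma_b$ on $[-1+\lambda,\infty)$ and $(-\beta+\lambda,\infty)$ respectively, solve the regularized system $P(\mu,u_0,v_0,f_\lambda,\sigma_{b,\lambda})$ by a Galerkin (or time-discretization) argument, derive a priori bounds uniform in $\lambda$, and pass to the limit. The engine of every estimate is the energy identity obtained by testing \eqref{EQ} with $u_t$ and using \eqref{BC1}--\eqref{BC0},
$$\frac{d}{dt}E(t) + \mu\int_0^1 u_{tx}^2\,dx = \int_0^1 g\,u_t\,dx,$$
with
$$E(t) = \frac12\int_0^1 u_t^2\,dx + \frac{\gamma}{2}\int_0^1 u_{xx}^2\,dx + \int_0^1\hat f(u_x-1)\,dx + \hat\sigma_b(u(t,0)).$$
Since $g$ is constant, Gronwall bounds $E$ on $[0,T]$ by the data alone. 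The decisive point is that (A1) turns this bound into the two lower bounds keeping the solution off the singularities: $\hat f(\varepsilon)\geq -C_f+c_f(1+\varepsilon)^{-2}$ gives $\int_0^1 u_x^{-2}\,dx\leq C$, which together with the uniform $H^2$-bound (hence Hölder continuity of $u_x$) forces a pointwise $u_x\geq\delta>0$; and the bound on $\hat\sigma_b(u(t,0))$ together with the last clause of (A1) forces $u(t,0)\geq-\beta+\delta'$. For small $\lambda$ these place the solution inside the region where $f_\lambda=f$ and $\sigma_{b,\lambda}=\sigma_b$, so $u_\lambda$ already solves the original system and the limit $\lambda\to0$ is a consistency check.

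For part (1) I would additionally produce the higher-order estimates behind (S1): differentiating \eqref{EQ} in $t$ and testing with $u_{tt}$, using the viscosity $\mu u_{txx}$ to absorb the top-order terms, yields $u_{tt}\in L^\infty(0,T;L^2)\cap L^2(0,T;H^1)$ and $u_t\in L^\infty(0,T;H^2)$, after which reading $\gamma u_{xxxx}=g-u_{tt}+f(u_x-1)_x+\mu u_{txx}$ as an elliptic identity gives $u\in L^\infty(0,T;H^4)$. The compatibility conditions (AS) are exactly what make the initial acceleration $u_{tt}(0)$ lie in $L^2$ and legitimize these differentiated estimates at $t=0$, the two scalar identities in (AS) being the statement that \eqref{BC1}--\eqref{BC0} hold at $t=0$. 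For part (2) I would approximate data satisfying (AW) by smoother data satisfying (AS), apply part (1), retain only the energy-level estimates (uniform along the approximation) and the lower bounds, which survive in the limit because $W(T)\hookrightarrow C(\overline{Q(T)})$, thereby obtaining a weak solution. Uniqueness for $\mu>0$ I would get by the energy method for $w=u^1-u^2$: on the common range $u_x\geq\delta$, $u(\cdot,0)\geq-\beta+\delta'$ the functions $f,\sigma_b$ are Lipschitz, and the viscous dissipation $\mu\|w_{tx}\|_{L^2}^2$ controls the first-derivative nonlinearity via Young's inequality, so Gronwall closes and $w\equiv0$.

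For part (3) existence I would send $\mu\downarrow0$ in the solutions of part (2): the energy identity shows $E$ and $\sqrt{\mu}\,u^\mu_{tx}$ are bounded uniformly in $\mu$, Aubin--Lions gives strong convergence of $u^\mu_x$ (hence of $f(u^\mu_x-1)$, using the uniform lower bound), and the viscous term $\mu\int_{Q(T)} u^\mu_{tx}\eta_x=\int_{Q(T)}\sqrt{\mu}\,(\sqrt{\mu}\,u^\mu_{tx})\,\eta_x\to0$, producing a weak solution in $W_0(T)$.

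The main obstacle is the uniqueness assertion of part (3). With the viscosity gone, $W_0(T)$ controls only $w_t\in L^\infty(0,T;L^2)$ and $w\in L^\infty(0,T;H^2)$, so $w_{tx}$ is unavailable and one cannot test the difference with $w_t$. My plan is to use a Ladyzhenskaya-type integrated test function $\eta(t,\cdot)=\int_t^s w(\tau,\cdot)\,d\tau$, which lies in ${\mathcal T}_0(T)$ precisely because $w\in L^\infty(0,T;H^2)$, and which produces the coercive quantity $\tfrac12\|w(s)\|_{L^2}^2+\tfrac{\gamma}{2}\|\int_0^s w_{xx}\,d\tau\|_{L^2}^2$. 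Writing $f(u^1_x-1)-f(u^2_x-1)=a\,w_x$ and exploiting both the beam coercivity from $\gamma>0$ and the strict monotonicity of the concrete stress (indeed $f'\geq\kappa/2>0$), the nonlinear contribution yields the favorable term $\tfrac12\int_0^1 a(0)\big(\int_0^s w_x\,d\tau\big)^2dx\geq0$ rather than one that must be absorbed. The delicate residue is the commutator that formally involves the uncontrolled quantity $\partial_t a\sim u_{tx}$; taming it is where a time-regularization of the test function and the problem-specific techniques from the authors' earlier work on the singular beam equation are needed, and this is the step I expect to be hardest.
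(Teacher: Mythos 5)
Your existence machinery is, at its core, the paper's own: the energy identity obtained by testing with $u_t$, the conversion of the resulting bound on $\int_0^1 u_x^{-2}\,dx$ plus the $H^2$ bound into a pointwise bound $u_x\geq\delta$ (this is exactly Lemma \ref{esti_below}), the use of (A1) at the contact point to get $u(t,0)\geq-\beta+\delta'$, and, for part (3), vanishing viscosity with the $\sqrt{\mu}\,u_{\mu tx}$ bound and Aubin compactness --- your part (3) existence paragraph is the paper's proof almost verbatim. The scaffolding differs in two ways. First, the paper truncates $f,\sigma_b$ from \emph{above} ($f_N,\sigma_{bN}$), keeping the singularities, and runs a Banach fixed point for the linearized problem (LP) inside a class $K(\delta_0,T)$ of comparison functions already bounded away from the singular sets; you instead regularize the singularities from below. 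Note that under (A1) alone $f$ need not be Lipschitz near $+\infty$, so your ``globally Lipschitz $f_\lambda$'' also needs the upper truncation that $f_N$ provides. Second, and more substantively, the paper proves (2) first and then gets (1) by a bootstrap: the weak solution makes $f(u_x-1)\in W^{1,2}(0,T;H)\cap L^\infty(0,T;H^1(0,1))$ and $\sigma_b(u(\cdot,0))\in W^{1,2}(0,T)$, so the linear theory (Lemma \ref{lem_aux}(2)) produces a strong solution that coincides with $u$ by uniqueness of weak solutions. Your order --- strong solutions first by differentiated-in-time estimates (your reading of the role of (AS) is correct), then weak solutions by approximating (AW) data --- is workable, but it forces you to build $H^4\times H^2$ approximations satisfying the two \emph{nonlinear} compatibility identities of (AS), an extra construction the paper's order avoids entirely. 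One caution on your energy: the contact-point bound needs the energy to contain the floor potential with a \emph{positive} sign, i.e. $-\hat\sigma_b(u(t,0))=\int_{u(t,0)}^0\sigma_b\geq 0$ as in \eqref{3-9}; with $+\hat\sigma_b(u(t,0))$, which is $\leq 0$ and unbounded below, Gronwall gives nothing at $x=0$ (the paper's own displays (W3), \eqref{lem3-1-1} and \eqref{3-9} are not mutually consistent on this sign, so pin it down rather than inherit it).

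The genuine gap is exactly where you located it: uniqueness in part (3). Your integrated test function $\eta(t)=\int_t^s w\,d\tau$ is the right device --- it is the ``dual equation method'' the paper invokes --- but in the paper that method is applied only to the \emph{linear} problem (Lemma \ref{lem2_aux}), where $F$ and $q$ are fixed data, the difference of two solutions solves the homogeneous problem, and no commutator ever appears. For the nonlinear problem the coefficient $a$ in $f(u^1_x-1)-f(u^2_x-1)=a\,w_x$ depends on the solutions; $\partial_t a$ involves $u^i_{tx}$, which $W_0(T)$ does not control, and the boundary contribution brings in the trace $w_t(\cdot,0)$, which does not even exist for $w_t\in L^\infty(0,T;L^2(0,1))$ --- which is also why the $\mu>0$ energy argument and \eqref{lem4-1-1} cannot be recycled. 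Your proposal stops at precisely this point and offers only the hope that time regularization will tame the commutator, so as it stands the uniqueness assertion of Theorem \ref{main_th} (3) is not proved. In fairness, the paper does not write this argument out either: it disposes of it in one sentence, by analogy and by reference to the authors' earlier works, so the single ingredient missing from your proposal is the one the paper itself outsources.
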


\vskip 12pt
From Definitions \ref{def_strong} and \ref{def_weak}, it is obvious that a strong solution of P$(\mu, u_0,  v_0, f,  \sigma_b)$ is a weak solution. Namely, by Theorem \ref{main_th} (2) the uniqueness of the strong solution is valid.   

This paper is organized as follows: 
In the next section  we establish existence and uniqueness of strong and weak solutions to P$(\mu, u_0,  v_0,  f,  \sigma_b)$ for $\mu > 0$. 
Also, the proof of Theorem \ref{main_th} (3) will be given in the last section. 

Throughout this paper,  we put $H = L^2(0,1)$ and its norm is $|\cdot|_H$ for simplicity. Moreover, we list some useful inequalities in our proofs as follows:
\begin{lem} \label{esti_below} (cf. \cite[Lemma 3.2]{Aiki-Kro-Muntean}) 
Let  $z  \in  H^1(0,1)$, and $r_1,  r_2$ be positive constants.  If 
$$ \int_0^1 \frac{1}{|z|^2} dx \leq r_1, \quad |z|_{H^1(0,1)} \leq r_2, $$
then it holds 
$$ |z| \geq \frac{r_2}{\sqrt{2}} \exp(- r_1 r_2^2) \quad \mbox{ on } [0,1]. $$
\end{lem}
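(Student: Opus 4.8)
The plan is to reduce to a strictly positive function and then control its pointwise oscillation through the logarithmic derivative. Since $H^1(0,1)\hookrightarrow C([0,1])$, the function $z$ is continuous, and I would first argue that $z$ cannot vanish on $[0,1]$: if $z(x_*)=0$ for some $x_*$, then the absolute continuity of $z$ and the Cauchy--Schwarz inequality give $|z(x)|\le |x-x_*|^{1/2}\,|z'|_{L^2(0,1)}\le r_2\,|x-x_*|^{1/2}$ near $x_*$, so that $1/|z|^2\ge (r_2^2|x-x_*|)^{-1}$ is non-integrable there, contradicting $\int_0^1 |z|^{-2}\,dx\le r_1<\infty$. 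Hence $z$ has a fixed sign, and replacing $z$ by $w:=|z|$, which lies in $H^1(0,1)$ with $|w'|=|z'|$ a.e. so that both hypotheses are preserved, I may assume $w>0$ on $[0,1]$.

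Next I would produce one point at which $w$ admits a quantitative lower bound. By Cauchy--Schwarz, $1=\int_0^1 w\cdot w^{-1}\,dx\le |w|_{L^2(0,1)}\bigl(\int_0^1 w^{-2}\bigr)^{1/2}\le |w|_{L^2(0,1)}\sqrt{r_1}$, whence $\int_0^1 w^2\,dx\ge 1/r_1$. Applying the mean value theorem for integrals to the continuous function $w^2$ then yields a point $x_1\in[0,1]$ with $w(x_1)^2=\int_0^1 w^2\,dx\ge 1/r_1$, so $w(x_1)\ge 1/\sqrt{r_1}$; note also $w(x_1)=|w|_{L^2(0,1)}\le r_2$, which is why the natural scale of the bound is set by $r_1$ and $r_2$.

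The propagation step is the heart of the argument. For any $x$, the fundamental theorem of calculus applied to $\log w$ gives
\[
\bigl|\log w(x)-\log w(x_1)\bigr|=\left|\int_{x_1}^{x}\frac{w'}{w}\,ds\right|\le \int_0^1\frac{|w'|}{w}\,ds\le \Bigl(\int_0^1 |w'|^2\Bigr)^{1/2}\Bigl(\int_0^1 \frac{1}{w^2}\Bigr)^{1/2}\le r_2\sqrt{r_1},
\]
using $\int_0^1|w'|^2\le |z|_{H^1(0,1)}^2\le r_2^2$ in the last inequality. Exponentiating and inserting $w(x_1)\ge 1/\sqrt{r_1}$ yields the uniform estimate $w(x)\ge \tfrac{1}{\sqrt{r_1}}\exp(-r_2\sqrt{r_1})$ for every $x\in[0,1]$, an exponential lower bound of exactly the asserted shape. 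Since one checks directly that this quantity dominates $\tfrac{r_2}{\sqrt2}\exp(-r_1 r_2^2)$ for all positive $r_1,r_2$ (equivalently $\sqrt2\,e^{s^2-s}\ge s$ with $s=r_2\sqrt{r_1}$), the stated estimate follows; the slightly lossy constants and exponent are harmless as they only weaken the bound.

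I expect the only genuine subtlety to be the non-vanishing/constant-sign step, which is precisely what legitimises taking $\log w$ and hence the whole scheme. Once positivity is secured, the averaging estimate that locates a point of controlled size and the logarithmic-derivative bound are routine, and the exponential form of the conclusion is dictated by the structure of the integral $\int_0^1 |w'|/w$.
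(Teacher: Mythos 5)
Your argument is correct and complete, and it in fact proves a sharper bound than the one stated. A point of comparison is unusual here: the paper does not prove Lemma \ref{esti_below} at all --- it is imported from \cite[Lemma 3.2]{Aiki-Kro-Muntean} --- so your proof can only be measured against that cited source, whose argument is of the same logarithmic-derivative type as yours. All three of your steps check out: (i) a zero $x_*$ of $z$ would give $|z(x)|\le r_2|x-x_*|^{1/2}$ and hence $\int_0^1|z|^{-2}\,dx=\infty$, contradicting the first hypothesis; (ii) Cauchy--Schwarz together with the integral mean value theorem produces $x_1$ with $w(x_1)\ge 1/\sqrt{r_1}$; (iii) the estimate $|\log w(x)-\log w(x_1)|\le |w'|_{L^2(0,1)}\bigl(\int_0^1 w^{-2}\,dx\bigr)^{1/2}\le r_2\sqrt{r_1}$ is legitimate because $w$ is absolutely continuous and bounded below by a positive constant on $[0,1]$, so $\log w$ is absolutely continuous and the fundamental theorem of calculus applies. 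This yields $|z|\ge r_1^{-1/2}\exp(-r_2\sqrt{r_1})$, and your final reduction to the stated constant is also right: with $s=r_2\sqrt{r_1}$, the inequality $\sqrt{2}\,e^{s^2-s}\ge s$ holds because $s\mapsto s^2-s-\ln s$ has derivative $(2s+1)(s-1)/s$, hence attains its minimum $0$ at $s=1$, so even $e^{s^2-s}\ge s$ is true. My only request is that you write out this one-line calculus verification rather than leaving it as ``one checks directly,'' since it is the sole step tying your stronger bound to the precise constant $\frac{r_2}{\sqrt{2}}\exp(-r_1r_2^2)$ appearing in the statement.
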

By this lemma, we can deal with the singular stress function $f$. The next lemma is concerned with the embedding for Sobolev spaces. 
\begin{lem} \label{GN_ineq} 
For some positive constant $C_0$ it holds that
\begin{align*}
 & |z|  \leq C_0( |z_x|_H^{1/2} |z|_H^{1/2} + |z|_H) \mbox{ on } [0,1]  \mbox{ for any } z \in H^1(0,1), \\
 & |z_x|_H   \leq C_0(|z_{xx}|_H + |z|_H)  \mbox{ for any } z \in H^2(0,1). 
\end{align*}
\end{lem}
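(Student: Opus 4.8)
The plan is to prove the two estimates independently by elementary one-dimensional calculus and then take $C_0$ to be the larger of the two constants obtained. Both arguments rest only on the fact that in one dimension $H^1(0,1)$ functions are absolutely continuous, so the fundamental theorem of calculus applies directly and no density argument is required.

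\textbf{First estimate.} For $z \in H^1(0,1)$ and any $x,y \in [0,1]$ the fundamental theorem of calculus gives $z(x)^2 - z(y)^2 = 2\int_y^x z z_x\,ds$. I would integrate this identity with respect to $y$ over $(0,1)$ to obtain
$$ z(x)^2 = \int_0^1 z(y)^2\,dy + 2\int_0^1\!\!\int_y^x z(s)z_x(s)\,ds\,dy, $$
and estimate the double integral by $2\int_0^1|z||z_x|\,ds \le 2|z|_H|z_x|_H$ via Cauchy--Schwarz. This yields $z(x)^2 \le |z|_H^2 + 2|z|_H|z_x|_H$ for every $x \in [0,1]$; taking square roots and using $\sqrt{a+b}\le\sqrt a+\sqrt b$ gives $|z(x)| \le |z|_H + \sqrt 2\,|z_x|_H^{1/2}|z|_H^{1/2}$, which is the first inequality with $C_0=\sqrt2$.

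\textbf{Second estimate.} For $z\in H^2(0,1)$ I would first locate a convenient point: since $\int_0^1 z_x\,dx = z(1)-z(0)$, the mean value theorem for integrals (note $z_x\in H^1(0,1)\subset C([0,1])$) provides $\xi\in(0,1)$ with $z_x(\xi)=z(1)-z(0)$. Writing $z_x(x)=z_x(\xi)+\int_\xi^x z_{xx}\,ds$ and bounding $\int_\xi^x|z_{xx}|\,ds \le |z_{xx}|_{L^1(0,1)} \le |z_{xx}|_H$ gives, for every $x$,
$$ |z_x(x)| \le |z(1)-z(0)| + |z_{xx}|_H \le 2\sup_{[0,1]}|z| + |z_{xx}|_H. $$
Inserting the first estimate for $\sup_{[0,1]}|z|$, and using $|z_x|_H \le \sup_{[0,1]}|z_x|$ on the unit interval, produces
$$ |z_x|_H \le 2C_0|z_x|_H^{1/2}|z|_H^{1/2} + 2C_0|z|_H + |z_{xx}|_H. $$

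\textbf{Closing the estimate.} The main (and only real) obstacle is that the right-hand side still contains $|z_x|_H$, brought in unavoidably when the pointwise size of $z$ is controlled through the first estimate; this is the feature special to the bounded interval, where boundary values cannot simply be discarded. I would remove it by Young's inequality, $2C_0|z_x|_H^{1/2}|z|_H^{1/2}\le\tfrac12|z_x|_H+2C_0^2|z|_H$, and absorb the resulting $\tfrac12|z_x|_H$ into the left-hand side, leaving $|z_x|_H \le 2|z_{xx}|_H + (4C_0+4C_0^2)|z|_H$ and hence the second inequality after enlarging the constant. Everything thus reduces to Cauchy--Schwarz, the fundamental theorem of calculus, and one absorption step.
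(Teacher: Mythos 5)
Your proposal is correct. Note that the paper offers no proof of this lemma at all: it is stated as a standard Sobolev embedding/interpolation result (a one-dimensional Gagliardo--Nirenberg type inequality), so there is no argument in the text to compare yours with. Your elementary derivation is sound on every point: the identity $z(x)^2=\int_0^1 z(y)^2\,dy+2\int_0^1\!\int_y^x zz_x\,ds\,dy$ combined with Cauchy--Schwarz gives the pointwise bound with constant $\sqrt2$; for the second estimate, the mean value step (equivalently, Lagrange's mean value theorem applied to $z\in C^1([0,1])$, which is available since $H^2(0,1)\subset C^1([0,1])$) gives $|z_x(x)|\le |z(1)-z(0)|+|z_{xx}|_H$, the bound $|z_x|_H\le\sup_{[0,1]}|z_x|$ is valid on the unit interval, and the absorption of $\tfrac12|z_x|_H$ into the left-hand side via Young's inequality is legitimate because $|z_x|_H<\infty$ for $z\in H^2(0,1)$. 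What your route buys, beyond simply citing the standard inequality as the paper implicitly does, is a self-contained argument with explicit constants ($\sqrt2$ for the first bound, $4\sqrt2+8$ for the second, so one may take $C_0=4\sqrt2+8$ for both); nothing is lost in generality, since the interval has unit length exactly as in the lemma.
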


\section{Problem with viscosity} \label{viscosity}
In order to show existence of solutions to P$(\mu, u_0,  v_0,  f,  \sigma_b)$ for $\mu > 0$ we introduce the following linear problem (LP)($\mu, u_0, v_0, g, F, q$) for $\mu > 0$: 
\begin{align}
&u_{tt}  + \gamma u_{xxxx}   - \mu  u_{txx} =   g + F_x
   \mbox{ in } Q(T),  \label{EQA1} \\
& \gamma u_{xxx} -  \mu u_{tx} = F, u_{xx} = 0
\mbox{ at } x = 1  \mbox{ on }  (0,T),  \label{BCA1}\\
& \gamma u_{xxx}  -  \mu u_{tx}  +  q = F, u_{xx} = 0
\mbox{ at } x = 0 \mbox{ on }  (0,T),  \label{BCA0} \\
& u(0) = u_0,  u_t(0) = v_0 \mbox{ on } (0,1),  \label{ICA} 
\end{align}
where $g$ is a number, 
$u_0$ and  $v_0$ are initial functions on $[0,1]$,  and $F$ and $q$ are given functions defined on $Q(T)$ and $[0, T]$, respectively.  
 
The aim of this section is to prove  existence and uniqueness of solutions to P$(\mu, u_0,  v_0,  f,  \sigma_b)$ for $\mu > 0$. 
As a first step in the proof, we provide the following lemma which guarantees existence and uniqueness of weak and strong solutions of (LP)($\mu, u_0, v_0, g, F, q$) on $[0,T]$. 

\begin{lem} \label{lem_aux}
Let $\gamma > 0$ and $g \in {\mathbb R}$.  

(1) Let  $\mu > 0$. If $u_0 \in H^2(0,1)$, $v_0 \in H$, $F \in L^2(0, T; H)$ and $q \in L^2(0, T)$, then the problem  (LP)$(\mu, u_0,  v_0, g, F, q)$ has a unique weak solution $u \in W(T)$ on $[0,T]$.  Namely, $u \in W(T)$, $u(0) = u_0$ and 
\begin{align}
 & - \int_{Q(T)} u_t \eta_t dxdt + \gamma \int_{Q(T)} u_{xx} \eta_{xx} dxdt 
 + \mu \int_{Q(T)} u_{tx} \eta_x dx dt  - \int_{Q(T)} F \eta_x dxdt  \nonumber \\
 = & \int_0^1 v_0 \eta(0) dx + \int_{Q(T)} g \eta \ dxdt  - \int_0^T  q \eta(\cdot,0) dt \quad \mbox{ for } \eta \in {\mathcal T}_0(T).  
  \nonumber 
\end{align} 

Moreover, it holds that 
\begin{align}
& \frac{1}{2} \frac{d}{dt} |u_t|_H^2 + \frac{\gamma}{2} \frac{d}{dt} |u_{xx}|_H^2 
 + \mu |u_{tx}|_H^2 \nonumber \\
 = & \int_0^1 g u_t dx   - \int_0^1 F u_{tx} dx + q u_t(\cdot, 0) \quad \mbox{ a.e. on } [0,T].  \label{lem3-1-1} 
 \end{align}
 
 (2) Let $\mu > 0$, $u_0 \in H^4(0,1)$,  $v_0 \in  H^2(0, 1)$,   $u_{0xx}(0) = u_{0xx}(1) = 0$, $F \in W^{1,2}(0, T; H) \cap L^{\infty}(0, T; H^1(0,1))$ and $q \in W^{1,2}(0,T)$. If $\gamma u_{0xxx}(0)  - \mu v_{0x}(0) + q(0) = F(0,0)$ and 
$\gamma u_{0xxx}(1)  - \mu v_{0x}(1) = F(0,1)$, then the problem  (LP)$(\mu, u_0,  v_0, g, F, q)$ has a unique strong solution $u \in S(T)$ on $[0,T]$. 
\end{lem}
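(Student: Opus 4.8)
The plan is to prove Lemma \ref{lem_aux} by the standard Galerkin (Faedo--Galerkin) method, treating part (1) as a baseline energy estimate argument and part (2) as a higher-regularity bootstrap of the same construction. For part (1), I would first fix a Galerkin basis for $H^2(0,1)$ that is orthonormal in $H = L^2(0,1)$; a convenient choice is the eigenfunctions $\{w_k\}$ of the bilinear form $(u,v)\mapsto \gamma\int_0^1 u_{xx}v_{xx}\,dx + \int_0^1 uv\,dx$ with the natural (Neumann-type) boundary conditions $u_{xx}=0$ at $x=0,1$ built into the form. Seeking $u_n(t) = \sum_{k=1}^n a_k^n(t) w_k$ and projecting the variational identity onto $\mathrm{span}\{w_1,\dots,w_n\}$ yields a linear system of second-order ODEs in the coefficients $a_k^n$, whose coefficient matrix involves the $\mu$-viscosity term; since $\mu>0$ this is a regular linear system with $L^2$-in-time forcing from $g$, $F$, and $q$, so Carathéodory theory gives a unique $a^n \in W^{2,2}(0,T)$.

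Next I would derive the a priori energy estimate, which is the heart of the argument. Testing the discrete equation with $u_{n,t}$ (equivalently, multiplying by $\dot a_k^n$ and summing) reproduces the identity \eqref{lem3-1-1} at the discrete level:
\begin{align}
\frac{1}{2}\frac{d}{dt}\Bigl(|u_{n,t}|_H^2 + \gamma|u_{n,xx}|_H^2\Bigr) + \mu|u_{n,tx}|_H^2 = \int_0^1 g\,u_{n,t}\,dx - \int_0^1 F\,u_{n,tx}\,dx + q\,u_{n,t}(\cdot,0).\nonumber
\end{align}
I would bound the right-hand side using Young's inequality and Lemma \ref{GN_ineq}: the boundary term $q\,u_{n,t}(\cdot,0)$ is controlled by the trace estimate $|u_{n,t}(\cdot,0)|\le C_0(|u_{n,tx}|_H^{1/2}|u_{n,t}|_H^{1/2}+|u_{n,t}|_H)$, and the $|u_{n,tx}|_H$ factor is absorbed into the dissipation $\mu|u_{n,tx}|_H^2$ on the left (with $\mu>0$ crucial here). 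After absorbing, Gronwall's inequality gives uniform bounds on $u_n$ in $L^\infty(0,T;H^2)$, on $u_{n,t}$ in $L^\infty(0,T;H)$, and on $u_{n,tx}$ in $L^2(0,T;H)$—exactly the $W(T)$-regularity. Weak-$*$ compactness then extracts a limit $u$, and passing to the limit in the (linear) variational identity against a fixed $\eta$ in the dense span is routine; the initial conditions $u(0)=u_0$, $u_t(0)=v_0$ pass to the limit using the $W^{1,2}(0,T;H)$ bound and a standard test against time-dependent $\eta$. Uniqueness follows by linearity: the difference of two solutions satisfies the homogeneous identity, and the energy equality \eqref{lem3-1-1} with zero data and zero initial conditions forces the difference to vanish.

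For part (2), the strategy is to upgrade regularity by differentiating the equation in time and running a second energy estimate on $v := u_t$. The natural compatibility conditions in the hypotheses—$u_{0xx}(0)=u_{0xx}(1)=0$ together with $\gamma u_{0xxx}(0)-\mu v_{0x}(0)+q(0)=F(0,0)$ and $\gamma u_{0xxx}(1)-\mu v_{0x}(1)=F(0,1)$—are precisely what is needed so that $u_{tt}(0)$, computed from the equation at $t=0$, lies in $H$ and the formal time-differentiated problem has admissible initial data. Concretely, I would establish the higher estimate at the Galerkin level by testing the time-differentiated discrete equation with $u_{n,tt}$, which produces bounds on $u_{n,tt}$ in $L^\infty(0,T;H)$ and on $u_{n,ttx}$ in $L^2(0,T;H)$, using the additional regularity $F\in W^{1,2}(0,T;H)\cap L^\infty(0,T;H^1)$ and $q\in W^{1,2}(0,T)$; here $F_t$ and $q_t$ appear as forcing and are square-integrable in time by hypothesis. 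Once $u_{tt}\in L^\infty(0,T;H)$ and $u_{tx}\in L^\infty(0,T;H^1)$ (the latter via $\int_0^t \mu|u_{ttx}|_H^2$) are in hand, the equation \eqref{EQA1} read as $\gamma u_{xxxx} = g + F_x - u_{tt} + \mu u_{txx}$ shows $u_{xxxx}\in L^\infty(0,T;H)$, giving $u\in L^\infty(0,T;H^4)$; the boundary conditions \eqref{BCA1}--\eqref{BCA0} hold in the trace sense from the limiting variational identity after integration by parts. I expect the main obstacle to be the boundary term and the compatibility bookkeeping in part (2): ensuring the time-differentiated boundary conditions remain consistent and that the trace term $q_t\,u_{n,tt}(\cdot,0)$ is absorbed into the enhanced dissipation $\mu|u_{n,ttx}|_H^2$ via Lemma \ref{GN_ineq} without circularity, since at the highest level one must be careful not to require more regularity of the data than is assumed.
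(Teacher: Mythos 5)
Your proposal takes a genuinely different route from the paper, which in fact offers no proof at all: the authors remark only that (LP) is linear, that \eqref{lem3-1-1} is ``easily obtained,'' and that the lemma follows ``by the standard way, for instance, the time-discretization method'' (Rothe's scheme, with references to Brokate--Sprekels and to Kosugi's thesis), whereas you construct the solution by Faedo--Galerkin approximation. Both methods turn on the same energy identity, and your handling of the boundary term $q\,u_t(\cdot,0)$ --- trace estimate from Lemma \ref{GN_ineq}, then absorption into the dissipation $\mu|u_{tx}|_H^2$, which is exactly where $\mu>0$ enters --- is the mechanism the paper itself reuses throughout Section \ref{viscosity}. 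Two spots in your outline need more than you give them. First, \eqref{lem3-1-1} is asserted for the weak solution itself, and your uniqueness argument leans on it; but you derive it only at the Galerkin level, and weak-$*$ limits preserve it only as an inequality (lower semicontinuity). Since $u_t\in L^2(0,T;H^1(0,1))$ but not $L^2(0,T;H^2(0,1))$, you cannot simply test with $u_t$; you must either mollify in time (legitimate here because the equation is linear) or, for uniqueness alone, use the classical device $\eta(t)=\int_t^s u\,d\tau$ for $t\le s$, $\eta=0$ on $[s,T]$, which lies in ${\mathcal T}_0(T)$. Second, in part (2) the time-differentiated Galerkin estimate is delicate precisely because the boundary conditions are natural and inhomogeneous: your eigenfunctions satisfy $w_{xxx}=0$ at $x=0,1$, while $u_0$ satisfies only $\gamma u_{0xxx}(0)-\mu v_{0x}(0)+q(0)=F(0,0)$, so the projected initial data need not yield a uniform $H$-bound on $u_{n,tt}(0)$; this is the ``compatibility bookkeeping'' you flag but do not resolve. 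A cleaner route, using your (correct) reading of the compatibility conditions, is to bootstrap at the PDE level: apply part (1) to (LP)$(\mu, v_0, w_0, 0, F_t, q_t)$ with $w_0:=g+F_x(0)-\gamma u_{0xxxx}+\mu v_{0xx}\in H$, integrate the resulting solution in time, and verify --- this is exactly where the compatibility conditions and $u_{0xx}(0)=u_{0xx}(1)=0$ are used --- that the integrated function is a weak solution of the original problem, hence equals $u$ by uniqueness; then $u_t\in W(T)$, and $u\in L^\infty(0,T;H^4(0,1))$ follows from reading the equation as $\gamma u_{xxxx}=g+F_x-u_{tt}+\mu u_{txx}$, as you say. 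Rothe's method, for its part, avoids the basis-compatibility issue entirely (each time step solves the fourth-order elliptic problem with the correct natural boundary conditions built in), which is presumably why the paper's references favor it; your Galerkin route buys a cleaner ODE system and a transparent energy identity, at the cost of the two repairs above.
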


\vskip 12pt
We note that the problem  (LP)($\mu, u_0, v_0, g,  F, q$) is linear,  and \eqref{lem3-1-1} and \eqref{lem3-1-1} are easily obtained. Consequently, Lemma \ref{lem_aux} can be proved by the standard way, for instance, the time-discretizatoin method (see \cite{Bro-Sp} and \cite{Kosugi(Thesis)}). So, we omit its proof.

\vskip 12pt 
In order to prove Theorem \ref{main_th} (1) and (2) by applying the Banach fixed point theorem, 
(AP)($\mu, u_0, v_0, \tilde{u}, f_N, \sigma_{bN}$) denotes (LP)$(\mu, u_0,  v_0, g, f_N(\tilde{u}_x-1), \sigma_{bN}(\tilde{u}(\cdot,0)))$ for
$N > 0$ and a given function $\tilde{u}$ on $Q(T)$, where $f_N$ and $\sigma_{bN}$ are trancations of $f$ and $\sigma_b$, respectively, that is,
$$
f_N(r) = \left\{ \begin{array}{ll} f(N) & \mbox{ for } r \geq N, \\ f(r) & \mbox{ for } -1 < r < N, \end{array} \right. 
\sigma_{bN}(r) = \left\{ \begin{array}{ll} \sigma_b(N) & \mbox{ for } r \geq N, \\ \sigma_b(r) & \mbox{ for } -\beta < r < N,  \end{array} \right. 
\mbox{ for } r \in{\mathbb R}.
$$ 
Here, we introduce a class of functions $\tilde{u}$ such that the problem (AP)($\mu, u_0, v_0, \tilde{u},  f_N, \sigma_{bN}$) has a weak solution as follows:  For $T > 0$ and  $\delta_0 > 0$ we put 
$$ K(\delta_0, T) = \{ \tilde{v} \in X(T) : \tilde{v}_x \geq \delta_0 \mbox{ a.e. on } Q(T) \mbox{ and } \tilde{v}(t, 0 ) \geq - \beta + \delta_0 \mbox{ for a.e.  } t \in [0,T] \}, $$
where $X(T) =  L^2(0, T; H^2(0,1)) \cap W^{1,2}(0,T; H)$. 
Clearly, Lemma \ref{lem_aux} (1) implies the existence and uniqueness of a weak solution $u$ to  (AP)($\mu, u_0, v_0, \tilde{u},  f_N, \sigma_{bN}$)  on $[0, T]$ for $\tilde{u} \in K(\delta_0, T)$ and $\delta_0 > 0$,  since $f_N$ and $\sigma_{bN}$ are Lipschitz continuous on $[\delta_0 - 1, N]$ and $[\delta_0 - \beta, N]$, respectively. 
From now on, we give some lemmas concerned with uniform estimates for solutions of (AP)($\mu, u_0, v_0, \tilde{u},  f_N, \sigma_{bN}$). 
For simplicity, let $\Gamma_N$ be the solution operator of (AP)($\mu, u_0, v_0, \tilde{u},  f_N, \sigma_{bN}$), namely, $\Gamma_N(\tilde{u}) = u$ for $\tilde{u} \in K(\delta_0, T)$. 

\begin{lem}  \label{esti_1}
Let $\gamma > 0$, $\mu > 0$, $g \in {\mathbb R}$, $\delta_0 > 0$ and $N > 0$, and assume (A1). 
Then, $\Gamma_N(\tilde{u}) \in X(T)$ for any $\tilde{u} \in K(\delta_0, T)$. 
Also, there exists a positive constant $C_1$ such that  
 \begin{align}
   & |u_t(t)|_H^2 + |u_{xx}(t)|_H^2 + \int_0^t |u_{x\tau}|_H^2 d\tau  \nonumber \\
\leq &  C_1 (|v_0|_H^2 + |u_{0xx}|_H^2) + C_1 \int_0^t (|f_N(\tilde{u}_x(\tau) - 1)|_H^2 + |\sigma_{bN}(\tilde{u}(\tau,0))|^2  +1) d\tau
 \label{Lem3-3-1}
\end{align}
for $0 \leq t \leq T$ and $\tilde{u} \in K(\delta_0, T)$, where $u = \Gamma_N(\tilde{u})$.

\end{lem}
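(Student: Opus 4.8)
The plan is to derive the energy estimate \eqref{Lem3-3-1} directly from the energy identity \eqref{lem3-1-1} provided by Lemma \ref{lem_aux} (1), applied to the linear problem (AP)$(\mu, u_0, v_0, \tilde u, f_N, \sigma_{bN})$, which by definition is (LP)$(\mu, u_0, v_0, g, F, q)$ with $F = f_N(\tilde u_x - 1)$ and $q = \sigma_{bN}(\tilde u(\cdot,0))$. Since $\tilde u \in K(\delta_0, T)$ guarantees $\tilde u_x \geq \delta_0$ and $\tilde u(\cdot, 0) \geq -\beta + \delta_0$, the truncated functions $f_N$ and $\sigma_{bN}$ are evaluated on the compact ranges $[\delta_0 - 1, N]$ and $[\delta_0 - \beta, N]$ where they are bounded and Lipschitz; hence $F \in L^2(0,T;H)$ and $q \in L^2(0,T)$, so Lemma \ref{lem_aux} (1) applies and gives $u = \Gamma_N(\tilde u) \in W(T)$, in particular $u \in X(T)$.

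The main computation is to integrate \eqref{lem3-1-1} in time from $0$ to $t$ and estimate the three terms on the right-hand side. Integrating the left-hand side yields $\frac{1}{2}|u_t(t)|_H^2 + \frac{\gamma}{2}|u_{xx}(t)|_H^2 + \mu \int_0^t |u_{x\tau}|_H^2 d\tau$ minus the corresponding data at $t=0$, namely $\frac{1}{2}|v_0|_H^2 + \frac{\gamma}{2}|u_{0xx}|_H^2$. For the right-hand side I would bound each term as follows. The gravity term $\int_0^t \int_0^1 g u_\tau \, dx\, d\tau$ is controlled by $\int_0^t |u_\tau|_H^2 d\tau + |g|^2 t$ via Young's inequality. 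The stress term $-\int_0^t \int_0^1 F u_{x\tau}\, dx\, d\tau$ is the delicate one: here I would apply Young's inequality in the form $|\int_0^1 F u_{x\tau} dx| \leq \frac{\mu}{2}|u_{x\tau}|_H^2 + \frac{1}{2\mu}|F|_H^2$, so that the $\frac{\mu}{2}|u_{x\tau}|_H^2$ part is absorbed into the $\mu \int_0^t |u_{x\tau}|_H^2$ term on the left, leaving only $\frac{1}{2\mu}\int_0^t |f_N(\tilde u_x - 1)|_H^2 d\tau$ on the right. The boundary term $\int_0^t q(\tau) u_\tau(\tau, 0)\, d\tau$ requires estimating the pointwise trace $u_\tau(\tau,0)$ by the first inequality of Lemma \ref{GN_ineq}, giving $|u_\tau(\tau,0)| \leq C_0(|u_{x\tau}|_H^{1/2}|u_\tau|_H^{1/2} + |u_\tau|_H)$; combined with Young's inequality this produces another small multiple of $|u_{x\tau}|_H^2$ (again absorbed on the left) plus terms in $|u_\tau|_H^2$ and $|q|^2 = |\sigma_{bN}(\tilde u(\cdot,0))|^2$.

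After absorbing the two $|u_{x\tau}|_H^2$ contributions into the left-hand side and collecting constants, I arrive at an inequality of the form
\begin{align}
& |u_t(t)|_H^2 + |u_{xx}(t)|_H^2 + \int_0^t |u_{x\tau}|_H^2 d\tau \nonumber \\
\leq & \; C(|v_0|_H^2 + |u_{0xx}|_H^2) + C\int_0^t (|f_N(\tilde u_x - 1)|_H^2 + |\sigma_{bN}(\tilde u(\cdot,0))|^2 + 1) d\tau + C\int_0^t |u_\tau|_H^2 d\tau. \nonumber
\end{align}
The final step is to eliminate the trailing $\int_0^t |u_\tau|_H^2 d\tau$ by Gronwall's inequality, which upgrades the estimate to the stated form with a possibly larger constant $C_1$ depending on $T$. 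The one point requiring care—the main obstacle—is the handling of the boundary trace term $q\, u_t(\cdot,0)$: the singularity-free nature of the truncated $\sigma_{bN}$ is what keeps $q$ in $L^2(0,T)$, while the trace interpolation inequality of Lemma \ref{GN_ineq} is essential to convert the pointwise evaluation $u_t(\cdot,0)$ into the $H^1$ quantities appearing in the energy, and the exponent $1/2$ there is precisely what allows the $|u_{x\tau}|_H$ factor to be absorbed by the viscous dissipation rather than overwhelming it.
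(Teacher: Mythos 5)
Your proof is correct and follows essentially the same route as the paper: the paper's (very terse) proof likewise invokes Lemma \ref{lem_aux} (1) for $\Gamma_N(\tilde u) \in X(T)$ and then applies Gronwall's inequality to the energy identity \eqref{lem3-1-1}. Your write-up simply fills in the details the paper leaves implicit—Young's inequality to absorb the stress term into the viscous dissipation, and the trace interpolation of Lemma \ref{GN_ineq} for the boundary term $q\,u_t(\cdot,0)$—exactly as intended.
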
 

\begin{proof} On account of  Lemma \ref{lem_aux} (1), we have $\Gamma_N(\tilde{u}) \in X(T)$ for any $\tilde{u} \in K(\delta_0, T)$. 
In addition, by applying Gronwall's inequality to \eqref{lem3-1-1}, this lemma is easily proved. 
\end{proof} 

Here, due to Banach's fixed point theorem to $\Gamma_N$ we shall show local existence of a weak solution of P$_{\mu}^N(u_0, v_0) :=$ P$(\mu, u_0,  v_0, f_N, \sigma_{bN})$ for $\mu > 0$ and $N > 0$ as the following lemma. 
\begin{lem} \label{local_existence} 
Let $\gamma > 0$, $\mu > 0$, $g \in {\mathbb R}$ and $N  > 0$, and assume (A1). If the condition (AW) holds, then there exists a unique weak solution $u$ of P$_{\mu}^N(u_0, v_0)$ on $[0,T']$ for some $0 < T' \leq T$.  Also, $T'$ is depending only on $|u_0|_{H^2(0,1)}$, $|v_0|_{H}$, $\min_{x \in [0,1]} u_{0x}(x)$ and $u_0(0)$. 
\end{lem}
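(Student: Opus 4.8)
The plan is to apply Banach's fixed point theorem to the solution operator $\Gamma_N$ of (AP)$(\mu, u_0, v_0, \tilde u, f_N, \sigma_{bN})$ on a suitable complete metric space. Since (AW) gives $u_{0x} > 0$ on the compact interval $[0,1]$ and $u_0(0) > -\beta$, I first fix $\delta_0 := \frac12\min\{\min_{x\in[0,1]} u_{0x}(x),\, u_0(0)+\beta\} > 0$, so that $u_{0x} \geq 2\delta_0$ on $[0,1]$ and $u_0(0) \geq -\beta + 2\delta_0$. I then seek $T' \in (0,T]$ for which $\Gamma_N$ maps $K(\delta_0, T')$ into itself and is a contraction there; its unique fixed point $u$ satisfies $u = \Gamma_N(u)$, i.e. it is the weak solution of P$_\mu^N(u_0,v_0)$ by Definition \ref{def_weak} and the weak formulation of (LP) in Lemma \ref{lem_aux} (1). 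Note that on $K(\delta_0,T')$ one has $\tilde u_x - 1 \geq \delta_0 - 1$ and $\tilde u(\cdot,0) \geq -\beta + \delta_0$, so $f_N(\tilde u_x - 1)$ and $\sigma_{bN}(\tilde u(\cdot,0))$ are bounded by constants $M_f, M_\sigma$ depending only on $\delta_0$ and $N$, using the continuity of $f$ on $[\delta_0 - 1, N]$ and of $\sigma_b$ on $[\delta_0 - \beta, N]$.

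For the self-mapping property I would argue as follows. By Lemma \ref{esti_1}, for $u = \Gamma_N(\tilde u)$ with $\tilde u \in K(\delta_0, T)$ the quantities $\sup_{[0,T]}(|u_t|_H^2 + |u_{xx}|_H^2)$ and $\int_0^T|u_{x\tau}|_H^2 d\tau$ are bounded by a constant $R$ depending only on $|v_0|_H$, $|u_{0xx}|_H$, $M_f$, $M_\sigma$ and $T$. Writing $u_x(t) - u_{0x} = \int_0^t u_{x\tau}\,d\tau$ in $H$, I get $|u_x(t) - u_{0x}|_H \leq \sqrt{t}\,(\int_0^t|u_{x\tau}|_H^2 d\tau)^{1/2}$, while $|u_{xx}(t) - u_{0xx}|_H$ stays bounded; feeding these into the first inequality of Lemma \ref{GN_ineq} yields $|u_x(t) - u_{0x}|_{C[0,1]} \leq C t^{1/4}$ for $t \in [0,T']$. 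Similarly, $u(t,0) - u_0(0) = \int_0^t u_\tau(\tau,0)\,d\tau$, and bounding $|u_\tau(\tau,0)|$ by Lemma \ref{GN_ineq} and integrating in time gives $|u(t,0) - u_0(0)| \leq C(t^{3/4} + t)$. Hence, choosing $T'$ small enough that both bounds are $\leq \delta_0$ on $[0,T']$, I obtain $u_x \geq 2\delta_0 - \delta_0 = \delta_0$ on $\overline{Q(T')}$ and $u(t,0) \geq -\beta + \delta_0$, i.e. $\Gamma_N(\tilde u) \in K(\delta_0, T')$. This step is the \emph{main obstacle}, since it is exactly where the lower bound on the strain (and the admissibility $u(\cdot,0) > -\beta$) must be propagated from the initial data; the embedding Lemma \ref{GN_ineq} together with the $\sqrt t$-gain from the fundamental theorem of calculus is what makes it work.

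For the contraction, take $\tilde u_1, \tilde u_2 \in K(\delta_0, T')$ and set $w = \Gamma_N(\tilde u_1) - \Gamma_N(\tilde u_2)$, which solves (LP) with $g = 0$, zero initial data and forcing $F = f_N((\tilde u_1)_x - 1) - f_N((\tilde u_2)_x - 1)$, $q = \sigma_{bN}(\tilde u_1(\cdot,0)) - \sigma_{bN}(\tilde u_2(\cdot,0))$. Applying the energy identity \eqref{lem3-1-1} to $w$, estimating $\int_0^1 F w_{\tau x}\,dx$ and $q\,w_\tau(\cdot,0)$ by Young's inequality (with $|w_\tau(\cdot,0)| \leq C|w_\tau|_{H^1}$) so as to absorb the $|w_{\tau x}|_H^2$ contributions into the $\mu|w_{\tau x}|_H^2$ term, and then applying Gronwall's inequality, I obtain $\sup_{[0,T']}(|w_t|_H^2 + |w_{xx}|_H^2) + \int_0^{T'}|w_{\tau x}|_H^2 d\tau \leq C\int_0^{T'}(|F|_H^2 + |q|^2)\,d\tau$. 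Since $f_N$ and $\sigma_{bN}$ are Lipschitz on $[\delta_0 - 1, N]$ and $[\delta_0 - \beta, N]$, the right-hand side is $\leq C\|\tilde u_1 - \tilde u_2\|_{X(T')}^2$. Finally, using $w(0) = w_t(0) = 0$ and the elementary bounds $\int_0^{T'}|w_{xx}|_H^2 \leq T'\sup|w_{xx}|_H^2$, $\int_0^{T'}|w|_H^2 \leq \tfrac{T'^3}{3}\sup|w_\tau|_H^2$ and the analogue for $\int_0^{T'}|w_\tau|_H^2$, I conclude $\|w\|_{X(T')}^2 \leq C(T' + T'^3)\|\tilde u_1 - \tilde u_2\|_{X(T')}^2$, so $\Gamma_N$ is a contraction on $K(\delta_0, T')$ once $T'$ is small. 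It remains to note that $K(\delta_0, T')$ is a closed subset of the Banach space $X(T')$: since $X(T') \hookrightarrow C([0,T']; H^1(0,1)) \hookrightarrow C(\overline{Q(T')})$, convergence in $X(T')$ preserves both constraints $\tilde v_x \geq \delta_0$ a.e. and $\tilde v(t,0) \geq -\beta + \delta_0$. Thus $K(\delta_0, T')$ is complete, Banach's theorem provides a unique fixed point $u$, which is the unique weak solution of P$_\mu^N(u_0,v_0)$ on $[0,T']$: (W1) follows from $u \in W(T')$ (Lemma \ref{lem_aux} (1)) and Lemma \ref{esti_1}, (W2) from $u \in K(\delta_0,T')$, and (W3) by putting $\tilde u = u$ in the weak form of (LP). Tracing the constants, $\delta_0$ depends only on $\min_{[0,1]} u_{0x}$ and $u_0(0)$, and the thresholds on $T'$ from the two steps depend only on $|u_0|_{H^2(0,1)}$, $|v_0|_H$, $\min_{[0,1]}u_{0x}$ and $u_0(0)$ (with $N,\mu,\gamma,g$ fixed), which gives the asserted dependence of $T'$.
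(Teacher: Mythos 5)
Your proof is correct and follows essentially the same route as the paper: Banach's fixed point theorem applied to $\Gamma_N$, with Lemma \ref{esti_1} supplying the a priori bound, Lemma \ref{GN_ineq} propagating the lower bounds $u_x \geq \delta_0$ and $u(\cdot,0) \geq -\beta+\delta_0$ through the fractional powers of $t$, and the energy identity \eqref{lem3-1-1} combined with the Lipschitz property of the truncations and Gronwall's inequality giving the contraction in $X(T')$. The only (harmless) deviation is that you contract on $K(\delta_0,T')$ directly, exploiting that $f_N$ and $\sigma_{bN}$ are bounded and Lipschitz on the half-lines $[\delta_0-1,\infty)$ and $[-\beta+\delta_0,\infty)$, whereas the paper works on the smaller set $K_1(\delta,T_1,M)$ with the extra bounds $|\tilde{u}_{xx}|_H \leq M$, $|\tilde{u}_t|_H \leq M$, which then must themselves be reproduced by $\Gamma_N$ via \eqref{3_002}.
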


\begin{proof} 
First, thanks to (AW) we can take $\delta > 0$ such that $u_{0x} \geq 2 \delta$ on $[0,1]$ and $u_0(0) \geq -\beta + 2\delta$, and put
$$ K_1(\delta, T_1, M) = \{ \tilde{u} \in K(\delta, T):  |\tilde{u}_{xx}|_H \leq M \mbox{ and } |\tilde{u}_t|_H \leq M \mbox{ a.e. on }  [0,T_1]\} $$ 
for $M > 0$ and $0 < T_1 \leq T$, which are chosen as suitable positive constants, later. 
It is clear that $K_1(\delta, T_1, M)$ is closed in $X(T_1)$ for $\mu > 0$ and $N > 0$.

Since there exists a positive constant $R_1(M)$ depending on $M$ such that 
$$ |\tilde{u}|,   |\tilde{u}_x| \leq R_1(M) \quad \mbox{ a.e. on } Q(T_1) \mbox{ for } \tilde{u} \in K_1(\delta, T_1, M), $$
\eqref{Lem3-3-1} implies existence of some positive constant $R_2(M)$ satisfying 
\begin{align}
& |\Gamma_N(\tilde{u})_t(t)|_H^2 + |\Gamma_N(\tilde{u})_{xx}(t)|_H^2 + \int_0^t |\Gamma_N(\tilde{u})_{x\tau}|_H^2 d\tau   \nonumber \\
\leq & C_1 (|v_0|_H^2 + |u_{0xx}|_H^2) + R_2(M) t \quad \mbox{ for } 0 \leq t \leq T_1 \mbox{ and } \tilde{u} \in K_1(\delta, T_1,  M).  \nonumber 
\end{align} 
Accordingly, there exist positive constants $M > 0$ and $T_1 \in (0,T]$ such that 
\begin{equation}
 |\Gamma_N(\tilde{u})_t(t)|_H^2 + |\Gamma_N(\tilde{u})_{xx}(t)|_H^2 + \int_0^t |\Gamma_N(\tilde{u})_{x\tau}|_H^2 d\tau  
\leq M^2  \mbox{ for } 0 \leq t \leq T_1 \mbox{ and } \tilde{u} \in K_1(\delta, T_1,  M). \label{3_002} 
\end{equation}
Here, thanks to \ref{GN_ineq}, it is easy to see that 
\begin{align*}
\Gamma_N(\tilde{u})(t, 0) & \geq  u_{0}(0)  - \int_0^t  |\Gamma_N(\tilde{u})_{\tau}(\tau,0)| d\tau \\
 &\geq  -\beta + 2 \delta - 2C_0 \int_0^t  ( |\Gamma_N(\tilde{u})_{\tau x}|_H + |\Gamma_N(\tilde{u})_{\tau}(\tau)|_H) d\tau \\
 & \geq  -\beta + 2 \delta - 2C_0  (t M + t^{1/2} M) \quad  \mbox{ for } 0 \leq t \leq T \mbox{ and } \tilde{u} \in K(\delta, T,  M). 
\end{align*}
Also, we have 
\begin{align*}
 & \Gamma_N(\tilde{u})_x(t, x)  \\
 \geq & \  u_{0x}(x) -   |\Gamma_N(\tilde{u})_x (t, x)  - u_{0x}(x)| \\
   \geq &  \ 2 \delta - C_0 ( |\Gamma_N(\tilde{u})_x(t)  - u_{0x}|_H^{1/2}  |\Gamma_N(\tilde{u})_{xx}(t)  - u_{0xx}|_H^{1/2} 
   +  |\Gamma_N(\tilde{u})_x(t)  - u_{0x}|_H)    \\ 
 \geq &  \ 2 \delta - C_0 ( | \int_0^t (|\Gamma_N(\tilde{u})_{x\tau}|_H d\tau|^{1/2}  |\Gamma_N(\tilde{u})_{xx}(t)  - u_{0xx}|_H^{1/2}
  +  | \int_0^t (|\Gamma_N(\tilde{u})_{x\tau}|_H d\tau|)  \\
 \geq &  \ 2 \delta - C_0 ( t^{1/4} M^{1/2} (M^{1/2} + |u_{0xx}|_H^{1/2}) + t^{1/2} M^{1/4} )
  \mbox{ for } (t, x)  \in Q(T_1) \mbox{ and } \tilde{u} \in K_1(\delta, T_1,  M). 
\end{align*}
Consequently, by \eqref{3_002} we can take 
$T_2 \in (0, T_1]$ such $\Gamma_N(\tilde{u}) \in K_1(\delta, T_2, M)$ for any $\tilde{u} \in K_1(\delta, T_2,  M)$, namely, 
$\Gamma_N:  K_1(\delta, T_2, M) \to K_1(\delta, T_2, M)$.  

To complete the proof of existence of a weak solution, we showthat  $\Gamma_N$ is contractive on $K_1(\delta, T_3, M)$ with respect to the norm of  $X(T_3)$ for some $T_3 \in (0, T_2]$. Let $\tilde{u}_i \in K_1(\delta, T_2, M)$ and $u_i = \Gamma_N(\tilde{u}_i)$ for $i = 1, 2$. 
For simplicity,  put $\tilde{u} = \tilde{u}_1 - \tilde{u}_2$ and $u = u_1 - u_2$. Then, $u$ is a weak solution of 
(LP)$(\mu, 0, 0, 0, F_{\ast}, q_{\ast})$ on $[0,T]$, where $F_{\ast} = f_N(\tilde{u}_{1x} -1) - f_N(\tilde{u}_{2x} -1)$ and 
$q_{\ast} = \sigma_{bN}(\tilde{u}_1(\cdot,0) )-  \sigma_{bN}(\tilde{u}_2(\cdot,0) )$ on $[0,T]$. 
Accordingly, by  \eqref{lem3-1-1} we have 
\begin{align*}
 \frac{1}{2} \frac{d}{dt} |u_t|_H^2 + \frac{\gamma}{2} \frac{d}{dt} |u_{xx}|_H^2 
 + \mu |u_{tx}|_H^2
 =    - \int_0^1 F_{\ast} u_{tx} dx + q_{\ast} u_t(\cdot, 0) \quad \mbox{ a.e. on } [0,T].  
 \end{align*}
Because of (A1) and the definition of $K_1(\delta, T_2, M)$, there exits a positive constant $C_3$ satisfying
$|F_{\ast}| \leq C_3 |\tilde{u}_x| \mbox{ a.e. on } Q(T)$,  $|q_{\ast}| \leq C_3 |\tilde{u}(\cdot, 0)| \mbox{ a.e. on } (0,T) \mbox{ for }
\tilde{u}_1, \tilde{u}_2 \in K_1(\delta, T_2, M)$. 
From the argument above with help of Lemma \ref{GN_ineq}, we infer that 
\begin{align}
   \frac{1}{2} \frac{d}{dt} |u_t|_H^2 + \frac{\gamma}{2} \frac{d}{dt} |u_{xx}|_H^2 
 + \frac{\mu}{2} |u_{tx}|_H^2  
 \leq & \     C_4 (|\tilde{u}_{x}|_H^2 + |\tilde{u}(\cdot, 0)|^2 + |u_t|_H^2)  \nonumber \\
\leq  & \ C_5 (|\tilde{u}_{xx}|_H^2 + |\tilde{u}|_H^2 + |u_t|_H^2)   \quad \mbox{ a.e. on } [0,T],   \label{eqn_3-9}
 \end{align}
where $C_4$ and $C_5$ are suitable positive constants.  By applying Gronwall's inequality to \eqref{eqn_3-9}, it is easy to see that  for some positive constants $C_6$ and $C_7$ the following inequalities  hold:  
\begin{align*}
&  |u_t(t)|_H^2 + |u_{xx}(t)|_H^2  + \int_0^t  |u_{\tau x}|_H^2 d\tau \leq C_6 M^2 t \quad  \mbox{ for } 0 \leq t \leq T_2, \\ 
&  |u|_{X(T_3)} \leq C_7 T_3 |\tilde{u}|_{X(T_3)} \quad \mbox{ for } 0 \leq T_3 \leq T_2, 
\end{align*}
where $|u|_{X(T_3)} = |u|_{L^2(0,T_3; H^2(0,1))} + |u_t|_{L^2(0, T_3; H)}$. Choosing a small $T_3 \in (0,T_2]$, we can apply Banach's fixed point theorem to $\Gamma_N$. Hence existence and uniqueness of weak solutions to P$_{\mu}^N$ hold on $[0,T_3]$ for $N > 0$ and $\mu > 0$. Thus, we have proved this lemma. 
\end{proof} 

From now on, we give a proof for existence and uniqueness of weak solutions of P$_{\mu}$ for $\mu > 0$.  
\begin{proof}[Proof of Theorem \ref{main_th} (2)] Let $\mu > 0$ and $N \geq N^*$. First, by (A1) is clear that 
\begin{equation} 
\hat{f}_N(\varepsilon) \geq - C_f + \frac{c_f}{(1+ \varepsilon)^2}   \mbox{ for } \varepsilon > -1, 
\hat{\sigma}_{bN}(r) \leq 0 \mbox{ for } r  > -\beta,   \label{eqn_c10}
\end{equation} 
where $\hat{f}_N$ and $\hat{\sigma}_{bN}$ are primitives of $f_N$ and $\sigma_{bN}$, respectively, with $\hat{f}_N(0) = \hat{\sigma}_{bN}(0) = 0$.

Due to Lemma \ref{local_existence}, for some $T' \in (0,T]$  P$_{\mu}^N(u_0, v_0)$ admits a weak solution $u$ on $[0,T']$. 
Obviously,  by \eqref{lem3-1-1} in  Lemma \ref{lem_aux} (1),  we have 
\begin{align}
& \frac{1}{2} \frac{d}{dt} |u_t(t)|_H^2 + \frac{\gamma}{2} \frac{d}{dt} |u_{xx}(t)|_H^2 
 + \mu |u_{tx}(t)|_H^2 \nonumber \\
 = & \int_0^1 g u_t (t) dx   - \int_0^1 f_N(u_x(t) - 1) u_{tx}(t) dx +\sigma_{bN}(u(t,0)) u_t(t, 0)  \nonumber \\
= &  \frac{d}{dt} (\int_0^1 g u(t) dx - \int_0^1 \hat{f}_N(u_x(t) -1) dx + \hat{\sigma}_{bN}(u(t,0)))  
 \quad \mbox{ for  a.e. } t \in [0,T'].  \label{3-4-00} 
 \end{align}
Integrate \eqref{3-4-00}, and then, we see that 
\begin{align*}
&  \frac{1}{2} |u_t(t)|_H^2 + \frac{\gamma}{2} |u_{xx}(t)|_H^2 + \mu \int_0^t |u_{\tau x}|_H^2 d\tau + \int_0^1 \hat{f}_N(u_x(t) -1) dx 
 - \hat{\sigma}_{bN}(u(t,0)) \\
=  &  \frac{1}{2} |v_0|_H^2 + \frac{\gamma}{2} |u_{0xx}|_H^2 + \int_0^1 \hat{f}_N (u_{0x} -1) dx 
-  \hat{\sigma}_{bN}(u_0(0)) + g\int_0^1 ( u(t) - u_0) dx  \mbox{ for  } 0 \leq t \leq T'. 
\end{align*}
Also, it is clear that 
$$ \int_0^1 u(t) dx  \leq |u_0|_H + \int_0^t |u_{\tau}|_H d\tau \mbox{ for } 0 \leq t \leq T'. $$ 
On account of (AW), we can take $N_1$ such that $|u_{0x}| \leq N_1$ on $[0,1]$ and $u_0(0) \leq N_1$. As a result, 
$f_N(u_{0x}-1) = f(u_{0x} - 1)$ and $\hat{\sigma}_{bN}(u_0(0)) = \hat{\sigma}_{b}(u_0(0))$ 
for $N \geq N_1$. Here, by putting 
$$ C(u_0, v_0)  = \frac{1}{2} |v_0|_H^2 + \frac{\gamma}{2} |u_{0xx}|_H^2 + \int_0^1 \hat{f}(u_{0x} -1) dx 
-  \hat{\sigma}_{b}(u_0(0)) +  g (|u_0|_H - \int_0^1  u_0 dx),  $$ 
 \eqref{eqn_c10} guarantees that 
\begin{align}
&  \frac{1}{2} |u_t(t)|_H^2 + \frac{\gamma}{2} |u_{xx}(t)|_H^2 + \mu \int_0^t |u_{\tau x}|_H^2 d\tau + c_f \int_0^1 \frac{1}{|u_x(t)|^2} dx 
- \hat{\sigma}_{bN}(u(t,0)) \nonumber \\
\leq  & C(u_0, v_0) + C_f + g \int_0^t  |u_{\tau}|_H  d\tau  \mbox{ for  } 0 \leq t \leq T' \mbox{ and } N \geq N_2 := \max\{N^*, N_1\}.  \label{3-9} 
\end{align}
Accordingly, thanks to Gronwall's inequality, there exists a positive constant $C_7$ independent of $N \geq N_2$ such that 
\begin{equation}
 |u_t(t)|_H^2 +  |u_{xx}(t)|_H^2 + \mu \int_0^t |u_{\tau x}|_H^2 d\tau +  \int_0^1 \frac{1}{|u_x(t)|^2} dx 
- \hat{\sigma}_{bN}(u(t,0)) \leq C_7 \quad \mbox{ for } 0 \leq t \leq T'. \label{3-4-001}
\end{equation}
We note that $C_7$ depends on $T$, but is independent of $T'$. Also,  Lemma \ref{esti_below} implies existence of a positive constant $\delta_1$ satisfying $u_x \geq \delta_1$  on $\overline{Q(T')}$. Moreover, from (A1) and $- \hat{\sigma}_{bN}(u(t,0)) \leq C_7$ for $t \in [0,T']$ it follows existence of a positive constant $\delta_2$ such that $u(t,0) \geq  -\beta + \delta_2$ for $t \in [0,T']$.  In fact, we may suppose that $u(t,0) < 0$. Clearly, thanks to (A1), 
$\hat{\sigma}_{bN}(u(t,0)) = \hat{\sigma}_{b}(u(t,0))$ and we get the required $\delta_2$.   From these estimate, we observe that the pair $\{u(T'), u_t(T')\}$ satisfies (AW). Hence, by Lemma \ref{local_existence} we can get a weak solution of P$_{\mu}^N(u(T'),  v(T'))$ on $[T', T'+ \hat{T}]$ for some $\hat{T} \in (0, T]$. On account of Remark \ref{equi_qeak}, we have a weak solution $u$ of P$_{\mu}^N(u_0,  v_0)$ on $[0,T' + \hat{T}]$. 
Since the estimate \eqref{3-4-001} holds for $t = T' + \hat{T}$, we can extend the weak solution on to $[0, T+ 2\hat{T}]$ as mentioned in Lemma \ref{local_existence}.  Therefore, by repeating this argument finite times, existence of a weak solution of P$_{\mu}^N(u_0,  v_0)$ on $[0,T]$. Moreover, from \eqref{3-4-001} we can take a positive constant $N_3$ such that $u_x \leq N_3$ on $\overline{Q(T)}$ and $u(\cdot, 0) \leq N_3$ on $[0,T']$. Therefore, for $N \geq N_3$, we have 
$f_N(u_x -1) = f(u_x -1)$ on $\overline{Q(T)}$ and $\sigma_{bN}(u(\cdot, 0)) = \sigma_b(u(\cdot, 0))$ on $[0, T]$. Namely, existence of weak solution of P$(\mu, u_0,  v_0)$ on $[0,T]$ is proved. 

The uniqueness of weak solutions can be obtained from \eqref{lem3-1-1} in Lemma \ref{lem_aux}. Indeed, Let $u_1$ and $u_2$ be weak solutions of  P$(\mu, u_0,  v_0)$ on $[0,T]$ and put $u = u_1 - u_2$. It is clear that $u$ is a weak solution of LP$(\mu, 0, 0, 0, F, q)$ on $[0,T]$, where 
$F = f(u_{1x} - 1) - f(u_{2x}-1)$  in $\overline{Q(T)}$ and $q = \sigma_b(u_1(\cdot, 0))- \sigma_b(u_2(\cdot, 0))$ on $[0,T]$. By Definition \ref{def_weak}, we can choose $\delta > 0$ and $M > 0$ such that 
$$ -1 + \delta \leq u_{ix} -1 \leq M \mbox{ on } \overline{Q(T)} \mbox{ and } -\beta + \delta \leq u_i(\cdot,0) \leq M \mbox{ on } [0, T] \mbox{ for } i = 1, 2. $$
Hence,  since we can regard that $f$ and $\sigma_b$ are Lipschitz continuous, Theorem \ref{main_th} (2)  is a direct consequence of  \eqref{lem3-1-1} in Lemma \ref{lem_aux} and Gronwall's inequality. 
\end{proof}

\vskip 12pt
Next, we prove existence of a strong solution. 
\begin{proof}[Proof of Theorem \ref{main_th} (1)] 
Assume (A1) and (AS) and let $\mu > 0$. Due to Theorem \ref{main_th} (2),  P$(\mu, u_0,  v_0)$ has a unique weak solution $u$ on $[0,T]$. 
Since $u \in W(T)$, it is obvious that $f(u_x - 1) \in W^{1,2}(0,T;  H) \cap L^{\infty}(0,T; H^1(0,1))$ and $\sigma_b(u(\cdot, 0)) \in W^{1,2}(0,T)$. 
On account of  Lemma \ref{lem_aux}(2), (LP)$(\mu, u_0, v_0, g , f(u_x -1), \sigma_b(u(\cdot, 0)) )$ has a unique strong solution $\hat{u}  \in S(T)$.  On the other hand, $u$ is the weak solution of (LP)$(\mu, u_0, v_0, g , f(u_x -1), \sigma_b(u(\cdot, 0)) )$.  Hence, the uniqueness of the weak solutions implies that $u = \hat{u}$. This shows the conclusion of Theorem \ref{main_th} (1). 
\end{proof}

\section{Problem without viscosity} \label{last}
The aim of this section is to  prove Theorem \ref{main_th} (3). 
For this aim, we provide a lemma for a linear problem in case $\mu = 0$,  similarly to the previous section. 
To do so, the linear problem (LP)$(0, u_0,  v_0, g, F, q)$ denotes the system \eqref{EQA1} $\sim$ \eqref{ICA} with $\mu  = 0$.  

\begin{lem} \label{lem2_aux}
Let $\gamma > 0$ and $g \in {\mathbb R}$.  
If $u_0 \in H^2(0,1)$, $v_0 \in H$, $F \in L^2(0, T; H)$ and $q \in L^2(0, T)$, then the problem  (LP)$(0, u_0,  v_0, g, F, q)$ has a unique weak solution $u$ on $[0,T]$.  Namely, $u \in W_0(T)$, $u(0) = u_0$ and 
\begin{align}
 & - \int_{Q(T)} u_t \eta_t dxdt + \gamma \int_{Q(T)} u_{xx} \eta_{xx} dxdt - \int_{Q(T)} F \eta_x dxdt  \nonumber \\
 = & \int_0^1 v_0 \eta(0) dx + \int_{Q(T)} g \eta \ dxdt  - \int_0^T  q \eta(\cdot,0) dt \quad \mbox{ for } \eta \in {\mathcal T}_0(T).  
  \nonumber 
\end{align} 

Moreover, it holds that 
\begin{align}
 \frac{1}{2} \frac{d}{dt} |u_t|_H^2 + \frac{\gamma}{2} \frac{d}{dt} |u_{xx}|_H^2  
 \leq  \int_0^1 g u_t dx   - \int_0^1 F u_{tx} dx + q u_t(\cdot, 0) \quad \mbox{ a.e. on } [0,T].  \label{lem4-1-1} 
 \end{align}
 
\end{lem}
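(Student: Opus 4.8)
The plan is to obtain Lemma \ref{lem2_aux} as the vanishing-viscosity limit of Lemma \ref{lem_aux}(1), which has the advantage of reusing the $\mu>0$ theory already in place (one could alternatively run a direct time-discretization or the abstract variational theory for second-order evolution equations, exactly as indicated for Lemma \ref{lem_aux}). For each $\mu>0$ let $u^\mu\in W(T)$ be the unique weak solution of (LP)$(\mu,u_0,v_0,g,F,q)$ furnished by Lemma \ref{lem_aux}(1). The first task is to derive bounds on $u^\mu$ that are independent of $\mu$. Integrating the energy identity \eqref{lem3-1-1} over $[0,t]$ gives
\[ \frac12|u^\mu_t(t)|_H^2+\frac{\gamma}{2}|u^\mu_{xx}(t)|_H^2+\mu\int_0^t|u^\mu_{\tau x}|_H^2\,d\tau=\frac12|v_0|_H^2+\frac{\gamma}{2}|u_{0xx}|_H^2+\int_0^t\Big(\int_0^1 g u^\mu_\tau\,dx-\int_0^1 F u^\mu_{\tau x}\,dx+q u^\mu_\tau(\cdot,0)\Big)d\tau, \]
and I would try to bound the right-hand side so as to close a Gronwall argument for $|u^\mu_t|_H$ and $|u^\mu_{xx}|_H$ in $L^\infty(0,T)$, uniformly in $\mu$, using Lemma \ref{GN_ineq} to control the boundary value $u^\mu_\tau(\cdot,0)$ by $|u^\mu_{\tau x}|_H+|u^\mu_\tau|_H$.

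The main obstacle is precisely the forcing term $\int_0^t\!\int_0^1 F u^\mu_{\tau x}$. Because $F$ enters the equation as the distributional derivative $F_x$, this term pairs $F$ against $u^\mu_{\tau x}$, and for $\mu=0$ there is no dissipation $\mu|u_{tx}|_H^2$ left to absorb it; the control of $u^\mu_{\tau x}$ available for $\mu>0$ also degenerates as $\mu\downarrow 0$. I would handle it either by integrating by parts in time (when $F$ is regular enough in time, e.g. $F\in W^{1,2}(0,T;H)$),
\[ \int_0^t\int_0^1 F u^\mu_{\tau x}\,dx\,d\tau=\int_0^1 F(t)u^\mu_x(t)\,dx-\int_0^1 F(0)u^\mu_x(0)\,dx-\int_0^t\int_0^1 F_\tau u^\mu_x\,dx\,d\tau, \]
after which Lemma \ref{GN_ineq} and Young's inequality let me absorb the endpoint contributions into $\frac{\gamma}{2}|u^\mu_{xx}|_H^2$, or by exploiting extra spatial regularity of $F$ from the fixed-point class (so that $F_x\in L^2(0,T;H)$ and the term is simply $\int F_x u^\mu_t$). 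This is the step that genuinely uses more than $F\in L^2(0,T;H)$, and it is exactly what is responsible for the weaker solution class $W_0(T)$ (in which $u_{tx}$ need not lie in $L^2$) and for the degradation of the energy equality \eqref{lem3-1-1} into the inequality \eqref{lem4-1-1}.

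With $\mu$-uniform bounds in hand, I would extract a subsequence with $u^\mu\rightharpoonup u$ weakly-$\ast$ in $L^\infty(0,T;H^2(0,1))$ and $u^\mu_t\rightharpoonup u_t$ weakly-$\ast$ in $L^\infty(0,T;H)$, so that $u\in W_0(T)$. Passing to the limit in the weak formulation of Lemma \ref{lem_aux}(1) is routine, since every term is linear and stable under these convergences; the viscous term is killed by $|\mu\int_{Q(T)}u^\mu_{tx}\eta_x|\le\sqrt\mu\,\big(\sqrt\mu\,|u^\mu_{tx}|_{L^2(Q(T))}\big)|\eta_x|_{L^2(Q(T))}\to 0$, because $\sqrt\mu\,u^\mu_{tx}$ remains bounded in $L^2(Q(T))$ by the energy identity. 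Hence $u$ satisfies the weak identity of Lemma \ref{lem2_aux}, with $u(0)=u_0$, and the energy inequality \eqref{lem4-1-1} follows from \eqref{lem3-1-1} by discarding the nonnegative term $\mu|u^\mu_{tx}|_H^2$ and invoking weak lower semicontinuity of the $H$-norms of $u_t$ and $u_{xx}$.

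Finally, uniqueness is the cleanest step and is where \eqref{lem4-1-1} pays off. If $u_1,u_2$ are two weak solutions, then $u:=u_1-u_2$ is a weak solution of (LP)$(0,0,0,0,0,0)$, so \eqref{lem4-1-1} with $g=0$, $F=0$, $q=0$ reduces to $\frac{d}{dt}\big(\frac12|u_t|_H^2+\frac{\gamma}{2}|u_{xx}|_H^2\big)\le 0$. Since $u_t(0)=0$ and $u_{xx}(0)=0$, this forces $u_t\equiv 0$ and $u_{xx}\equiv 0$, and together with $u(0)=0$ we conclude $u\equiv 0$. It is worth stressing that an inequality in \eqref{lem4-1-1}, rather than the full equality, already suffices here, because only the upper bound on the energy of the difference is needed.
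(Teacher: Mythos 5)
Your existence argument is essentially the paper's: vanishing viscosity via Lemma \ref{lem_aux}(1), $\mu$-uniform bounds from \eqref{lem3-1-1}, weak-$\ast$ compactness, the $\sqrt{\mu}$ bound to kill the viscous term, and lower semicontinuity to degrade the identity \eqref{lem3-1-1} into the inequality \eqref{lem4-1-1}. Your worry about the forcing term $\int_0^1 F u_{\tau x}\,dx$ is legitimate: with only $F \in L^2(0,T;H)$ its absorption into $\mu|u_{tx}|_H^2$ costs a factor $1/\mu$, and the paper simply declares the $\mu$-uniform bounds ``obvious''. (In the paper's actual vanishing-viscosity argument for Theorem \ref{main_th}(3) the difficulty disappears, because there $F = f(u_{\mu x}-1)$ and $\int_0^1 f(u_{\mu x}-1)u_{\mu tx}\,dx = \frac{d}{dt}\int_0^1 \hat{f}(u_{\mu x}-1)\,dx$ is an exact time derivative.) Note, however, that your proposed remedies strengthen the hypotheses on $F$ beyond those of the lemma, so what you would prove is a variant of the statement rather than the statement itself.

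The genuine gap is your uniqueness step, and it is exactly where the paper takes a different route. You apply \eqref{lem4-1-1} to the difference $u = u_1 - u_2$ of two weak solutions. But \eqref{lem4-1-1} is not a property of arbitrary weak solutions in $W_0(T)$: in your own construction it is obtained only for the particular solution produced as a viscous limit, via lower semicontinuity. For a general weak solution no energy inequality can be derived, because $u_t \in L^{\infty}(0,T;H)$ lacks the spatial regularity needed to serve as (or be approximated by) an admissible test function --- elements of ${\mathcal T}_0(T)$ must lie in $L^2(0,T;H^2(0,1))$. Hence nothing guarantees that the difference between your constructed solution and a hypothetical second weak solution satisfies \eqref{lem4-1-1}; assuming it does is essentially assuming the uniqueness you want to prove. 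This is the classical subtlety for weak solutions of hyperbolic problems (for $\mu > 0$ it is invisible, since weak solutions then lie in $W(T)$ and Lemma \ref{lem_aux}(1) supplies the identity for them). The paper instead proves uniqueness by the dual equation method, as in \cite{Aiki-Kosugi-2021}. A concrete elementary repair in that spirit: for fixed $s \in (0,T]$, test the homogeneous weak formulation satisfied by $u$ with $\eta(t,\cdot) = \int_t^s u(\tau,\cdot)\,d\tau$ for $t \le s$ and $\eta = 0$ for $t > s$; this $\eta$ belongs to ${\mathcal T}_0(T)$ precisely because $u \in L^{\infty}(0,T;H^2(0,1))$, and, using $u(0)=0$, it yields
\begin{equation*}
\frac{1}{2}|u(s)|_H^2 + \frac{\gamma}{2}\Bigl|\int_0^s u_{xx}(\tau)\,d\tau\Bigr|_H^2 = 0,
\end{equation*}
so that $u(s) = 0$ for every $s$.
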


\begin{proof}
Lemma  \ref{lem_aux} implies that (LP)$(\mu, u_0,  v_0, g, F, q)$ has a weak solution $u_{\mu}$ on  $[0,T]$ for $\mu > 0$. 
Obviously, thanks to \eqref{lem3-1-1}, $\{u_{\mu}\}$ is bounded in $L^{\infty}(0, T; H^2(0,1))$ and $W^{1,2}(0, T; H)$.  
Hence,  taking a subsequence of $\{u_{\mu}\}$ we can prove the existence of a weak solution of (LP)$(0, u_0,  v_0, g, F, q)$ on. $[0,T]$.
The uniqueness is proved by the  dual equation method in the similar way to that in \cite{Aiki-Kosugi-2021}. 
Moreover, \eqref{lem4-1-1} can be shown from \eqref{lem3-1-1}.  
\end{proof}

\begin{proof}[Proof of Theorem \ref{main_th} (3)]
By Theorem \ref{main_th} (2) 
and \eqref{3-9}, P$(\mu, u_{0},  v_{0})$ has a weak solution $u_{\mu} \in W(T)$ for any $\mu  >0$ and it holds that 
\begin{align*} 
&  \frac{1}{2} |u_{\mu t}(t)|_H^2 + \frac{\gamma}{2} |u_{\mu xx}(t)|_H^2 + \mu \int_0^t |u_{\mu \tau x}|_H^2 d\tau + c_f \int_0^1 \frac{1}{|u_{\mu x}(t)|^2} dx 
- \hat{\sigma}_b(u_{\mu}(t,0)) \\
\leq  & C(u_0, v_0) + C_f + g \int_0^t  |u_{\mu \tau}|_H  d\tau \quad  \mbox{ for  } 0 \leq t \leq T.   
\end{align*}
On account of the Gronwall argument and Lemma \ref{esti_below} 
we see that the set $\{u_{\mu}\}$ is bounded in $L^{\infty}(0, T; H^2(0,1))$ and  $W^{1,\infty}(0,T; H)$, 
$u_{\mu x} \geq \delta_0 \mbox{ a.e. on } Q(T)$ and 
$u_{\mu}(t, 0) \geq  -\beta + \delta_0$ for a.e. $t \in [0,T]$, where $\delta_0$ is a positive constant independent of $\mu$. Moreover, 
$\{ \sqrt{\mu} u_{\mu \tau x}\}$ is bounded in $L^2(0, T; H)$. From these estimates, the Aubin compact theorem \cite{Lions} guarantees choice of a subsequence $\{\mu_j\}$ and $u \in L^{\infty}(0, T; H^2(0,1)) \cap W^{1,\infty}(0,T; H)$ such that $u_j (:= u_{\mu_j}) \to u$ weakly* in $L^{\infty}(0, T; H^2(0,1))$, $W^{1,\infty}(0,T; H)$ and strongly in $L^2(0, T; H^1(0,1))$ as $j \to \infty$. 
Also, by these convergences we have  
$u \geq \delta_0 \mbox{ a.e. on } Q(T)$ and $u(t, 0) \geq  -\beta + \delta_0$ for a.e. $t \in [0,T]$. Here, we note that 
\begin{align}
 & - \int_{Q(T)} u_{jt} \eta_t dxdt + \gamma \int_{Q(T)} u_{jxx} \eta_{xx} dxdt + \int_{Q(T)} f(u_{jx} - 1) \eta_x dxdt + \mu_j \int_{Q(T)} u_{jtx} \eta_x dxdt  
 \nonumber \\
 = & \int_0^1 v_0 \eta(0) dx  + \int_{Q(T)} g \eta \ dxdt - \int_0^T \sigma_b(u_j(t, 0)) \eta(t,0) dt 
\quad  \mbox{ for  } \eta \in {\mathcal T}_0(T) \mbox{ and } j.    \label{4-10}
\end{align} 
Hence, by the convergences, the estimate for $\sqrt{\mu} u_{\mu \tau x}$ and \eqref{4-10},  we can easily proved the existence of a weak solution. 

The proof of the uniqueness  is quite similar to that of Theorem \ref{main_th} (1). 
\end{proof} 

\begin{rmk}
\rm The proof of Theorem \ref{main_th} (3) given as above is quite  similar to that of \cite{Kosugi2}. 
\end{rmk}

\bibliographystyle{plain}
\bibliography{data}

\end{document}